\theoremstyle{plain}
\newtheorem{theo}{Theorem}[section]
\newtheorem{lemma}[theo]{Lemma}
\newtheorem{propo}[theo]{Proposition}
\newtheorem{coro}[theo]{Corollary}
\theoremstyle{definition}
\newtheorem{defi}[theo]{Definition}
\newtheorem{rem}[theo]{Remark}
\newtheorem{exam}[theo]{Example}
\newtheorem{notation}[theo]{Notation}
\newcommand\Po{\operatorname{Po}}
\newcommand\Tc{\operatorname{Tc}}
\newcommand\Tcsh[1]{\operatorname{#1\textrm{-}Tc}}
\newcommand\Rt{\operatorname{Rt}}
\newcommand\Gd{\operatorname{Gd}}
\newcommand\Gdcard[1]{\operatorname{#1\textrm{-}Gd}}
\newcommand\GdDircard[1]{\operatorname{#1\textrm{-}GdDir}}
\newcommand\Mono{\operatorname{Mono}}
\newcommand\Epi{\operatorname{Epi}}
\newcommand\id{\operatorname{id}}
\newcommand\Set{\operatorname{\bf Set}}
\newcommand\SSet{\operatorname{\bf SSet}}
\newcommand\cof{\operatorname{cof}}
\newcommand\cell{\operatorname{cell}}
\newcommand\colim{\operatorname{colim}}
\newcommand\ca{\mathcal {A}}
\newcommand\cb{\mathcal {B}}
\newcommand\cc{\mathcal {C}}
\newcommand\ci{\mathcal {I}}
\newcommand\ck{\mathcal {K}}
\newcommand\cl{\mathcal {L}}
\newcommand\crr{\mathcal {R}}
\newcommand\cs{\mathcal {S}}
\newcommand\cp{\mathcal {P}}
\newcommand\cx{\mathcal {X}}
\newcommand\bz{\mathbb {Z}}
\newlength{\hlp}
\newcommand{\leftbox}[2]{\settowidth{\hlp}{$#1$}\makebox[\hlp][l]{${#1}{#2}$}}
\newcommand{\stdpbsize}{15pt}
\newcommand{\stdpboffset}{.5}
\newcommand{\xycorner}[3]{\save #2="a";#1;"a"**{}?(\stdpboffset);"a"**\dir{-};#3;"a"**{}?(\stdpboffset);"a"**\dir{-} \restore}
\newcommand{\po}{\xycorner{[]+<-\stdpbsize,0pt>}{[]+<-\stdpbsize,-\stdpbsize>}{[]+<0pt,-\stdpbsize>}}
\date{April 25, 2013}
\begin{document}
\title[On a fat small object argument]
{On a fat small object argument}
\author[M. Makkai, J. Rosick\'{y} and L. Vok\v r\'\i{}nek]
{M. Makkai$^{*}$, J. Rosick\'{y}$^{**}$ and L. Vok\v r\'\i{}nek$^{**}$}
\thanks{$^{*}$ Supported by the project CZ.1.07/2.3.00/20.0003 of the Operational Programme Education for Competitiveness of the Ministry of Education, Youth and Sports of the Czech Republic.\\\indent $^{**}$ Supported by the Grant Agency of the Czech republic under the grant P201/12/G028.} 
\address{
\newline 
Department of Mathematics and Statistics\newline
Masaryk University, Faculty of Sciences\newline
Kotl\'{a}\v{r}sk\'{a} 2, 611 37 Brno, Czech Republic\newline
makkai@math.mcgill.ca\newline
rosicky@math.muni.cz\newline
koren@math.muni.cz
}
 
\begin{abstract}
Good colimits introduced by J.~Lurie generalize transfinite composites and provide an important tool for understanding cofibrant generation in locally presentable categories. We will explore the relation of good colimits to transfinite composites further and show, in particular, how they eliminate the use of large objects in the usual small object argument.
\end{abstract} 
\keywords{good colimit, cofibrant generation, small object argument, locally presentable category}
\subjclass[2010]{55U35, 18C35}

\maketitle
 
\section{Introduction}
Combinatorial model categories were introduced by J.~H.~Smith as model categories which are locally presentable and cofibrantly generated.
The latter means that both cofibrations and trivial cofibrations are cofibrantly generated by a set of morphisms. He has not published his results 
but most of them can be found in \cite{B}, \cite{D}, \cite{L} and \cite{R}. A typical feature of a combinatorial model category $\ck$ is the existence
of a regular cardinal $\lambda$ such that everything happens below $\lambda$, i.e., among $\lambda$-presentable objects, and then it is extended
to $\ck$ by using $\lambda$-filtered colimits. In particular, fibrant objects form a $\lambda$-accessible category and any cofibrant
object is a $\lambda$-filtered colimit of $\lambda$-presentable cofibrant objects. In general, this cardinal $\lambda$ is greater than $\kappa$
in which $\ck$ is presented. This means that $\ck$ is $\kappa$-combinatorial in the sense that it is locally $\kappa$-presentable and both 
cofibrations and trivial cofibrations are cofibrantly generated by a set of morphisms between $\kappa$-presentable objects. For example,
the model category $\SSet$ of simplicial sets is $\omega$-combinatorial but finitely presentable simplicial sets have $\omega_1$-presentable
fibrant replacements. One of our main results is that any cofibrant object in a $\kappa$-combinatorial model category is a $\kappa$-filtered colimit
of $\kappa$-presentable cofibrant objects. The proof is based on the concept of a good colimit.
Good colimits were introduced by Lurie in \cite{L} and studied by the first author in \cite{M}. They generalize transfinite composites
but, while transfinite composites are thin and include large objects, good colimits are fat but their objects can be made small. This leads
to the just mentioned result and may be called a \textit{fat small object argument}. One of its consequences is the result of Joyal and Wraith
\cite{JW} that any acyclic simplicial sets is a filtered colimit of finitely presentable acyclic simplicial sets. The original motivation for the introduction of good limits in \cite{L} was to prove that a retract of a cellular morphism is cellular in retracts (of small cellular morphisms).

It is remarkable that the same idea independently emerged in mo\-du\-le theory where the Hill lemma was used for the same purpose (see \cite{S}).
In particular, \cite{S} shows, in this additive setup, that a retract of a cellular morphism is cellular in retracts. Both the model category and the module theory situation subsumes into the framework of a locally presentable category equipped
with a cofibrantly generated weak factorization system. We are working in this context and show how filtered colimits mix with those used
in cofibrant generation. More results in this direction will be presented in \cite{MR}.

In the appendix, we reprove Lurie's result about the elimination of retracts using $\kappa$-good colimits which are moreover $\kappa$-directed (this is a major departure from Lurie's approach). Such colimits play a central role in our paper, and are essential for our applications.

\section{Weak factorization systems}
Let $\ck$ be a category and $f\colon A\to B$, $g\colon C\to D$ morphisms such that in each commutative square
$$
\xymatrix{
A \ar[r]^{u} \ar[d]_{f}& C \ar[d]^g\\
B\ar[r]_v & D
}
$$
there is a diagonal $d\colon B \to C$ with $df=u$ and $gd=v$. Then we say that $g$ has the \textit{right lifting property}
w.r.t.\ $f$ and $f$ has the \textit{left lifting property} w.r.t.\ $g$. For a class $\cx$ of morphisms of $\ck$ we put
\begin{align*}
\cx^{\square}& = \{g\mid g \ \mbox{has the right lifting property
w.r.t.\ each $f\in \cx$\} and}\\
{}^\square\cx & = \{ f\mid f \ \mbox{has the left lifting property
w.r.t.\ each $g\in \cx$\}.}
\end{align*}

\noindent
A \textit{weak factorization system} $(\cl,\crr)$ in a category $\ck$ consists of two classes $\cl$ and $\crr$ of morphisms of $\ck$ such that
\begin{enumerate}
\item[(1)] $\crr = \cl^{\square}$, $\cl = {}^\square \crr$, and
\item[(2)] any morphism $h$ of $\ck$ has a factorization $h=gf$ with
$f\in \cl$ and $g\in \crr$.
\end{enumerate}

\noindent
A weak factorization system $(\cl,\crr)$ is called \textit{cofibrantly generated} if there is a set $\cx$ of morphisms such that $\crr=\cx^\square$.

\begin{notation}\label{not2.1}
{
In order to state closure properties of the class $\cl$, we introduce the following notation. Let $\cx$ be a class of morphisms in $\ck$. 

(1) $\Po(\cx)$ denotes the class of pushouts of morphisms in $\cx$: $f\in\Po(\cx)$ iff $f$ is an isomorphism or there is a pushout diagram
$$
\xymatrix{
A \ar[r]^{f} & B \po \\
X \ar [u]^{} \ar [r]_{g} &
Y \ar[u]_{}
}
$$
 
with $g\in\cx$.

(2) $\Tc(\cx)$ denotes the class of transfinite composites (= compositions) of morphisms from $\cx$: $f\in\Tc(\cx)$ iff there is a smooth chain 
$(f_{ij}\colon A_i \to A_j)_{i\leq j\leq \lambda}$ (i.e., $\lambda$ is an ordinal, $(f_{ij}\colon A_i \to A_j)_{i<j}$ is a colimit for any limit ordinal 
$j\leq\lambda$) such that $f_{i,i+1}\in\cx$ for each $i< \lambda$ and $f=f_{0\lambda}$.
 
(3) $\Rt(\cx)$ denotes the class of retracts of morphisms in $\cx$ in the category $\ck^2$ of morphisms of $\ck$.

(4) $\cell(\cx)=\Tc\Po(\cx)$ denotes the \textit{cellular closure} of $\cx$; the elements of $\cell(\cx)$ are called \emph{$\cx$-cellular maps} or \emph{relative $\cx$-cell complexes}, and

(5) $\cof(\cx)=\Rt\Tc\Po(\cx)$ the \textit{cofibrant closure} of $\cx$; the elements of $\cof(\cx)$ are called \emph{$\cx$-cofibrations} or simply \emph{cofibrations}.

A basic property of a locally presentable category $\ck$ is that the pair $(\cof(\cx),\cx^\square)$ is a weak factorization system for any set $\cx$ of morphisms. In particular
$$
{}^\square(\cx^\square)=\cof(\cx)
$$ 
(``small object argument''); see \cite{B}.

Later, we will use the following simple observation: in the above (defining) equality $\cof(\cx)=\Rt\Tc\Po(\cx)$, it is sufficient to consider retractions whose domain components are the identity morphisms, i.e.\ retractions taking place in the respective under category $A/\ck$. This is beacause any retract $f$ of $g$ can be expressed also as a retract of the pushout $g'$ of $g$ along the retraction $r$ of the domains,
\[\xymatrix{
B \ar[r] & Y \ar[r] \ar@/^10pt/[rr] & Y' \ar@{-->}[r] \po & B \\
A \ar[r] \ar[u]_-f & X \ar[r]_-r \ar[u]_-g & A \ar[u]_-{g'} \ar@/_5pt/[ru]_-f
}\]

(6) Let $\ck_\kappa$ denote the full subcategory of $\ck$ consisting of $\kappa$-presentable objects and $(\ck_\kappa)^2$ the category of morphisms of $\ck_\kappa$. 
For a class $\cx\subseteq(\ck_\kappa)^2$, we put
$$
\Po_\kappa(\cx)=\Po(\cx)\cap(\ck_\kappa)^2
$$
$$
\cell_\kappa(\cx)=\cell(\cx)\cap(\ck_\kappa)^2
$$
$$
\cof_\kappa(\cx)=\cof(\cx)\cap(\ck_\kappa)^2
$$
and denote $\Tcsh\kappa(\cx)$ the class of transfinite composites of length smaller than $\kappa$, i.e., $\lambda<\kappa$ in (2).  
}
\end{notation}
 
\section{Finite fat small object argument}\label{s:finite_fsoa}
In this short section, we outline our fat small object argument in the case $\kappa=\aleph_0$. We assume that $\ck$ is locally finitely presentable and that the set $\cx$ consists of morphisms between finitely presentable objects. For simplicity, we will also assume that all $\cx$-cofibrations are regular monomorphisms (this assumption can be removed).

Let $f\colon A\to B$ be a morphism. A presented finite cell complex (see \cite[Section 10.6]{Hi}) in $A/\ck/B$ (i.e.\ the category of objects of $\ck$ under $A$ and over $B$) is a finite sequence $C$ of the form
\[A=A_0\to A_1\to\cdots\to A_n\to B,\]
whose composition is $f$, together with an expression of each $A_{i-1}\to A_i$ as a pushout of a finite coproduct of elements of $\cx$,
\[\xymatrix{
A_{i-1} \ar[r] & A_i \po \\
\bigsqcup\limits_{j\in J_i}X_j \ar[r]_{\bigsqcup g_j} \ar[u] & \bigsqcup\limits_{j\in J_i}Y_j \ar[u]
}\]
We call the components $X_j\to A_{i-1}$ of the left vertical map the characteristic maps and assume that they are all different (no two cells are glued along the same map at the same step). We stress that the sets $J_i$ and the characteristic maps are taken as a part of the structure of a presented cell complex. We denote $|C|=A_n$ the ``total space'' of $C$, it is naturally an object of $A/\ck/B$. We also define $A_m=A_n$ for $m\geq n$. In this way a presented finite cell complex can be prolonged arbitrarily.

A subcomplex inclusion $\iota\colon C\to C'$ is a sequence of morphisms $\iota_i\colon A_i\to A_i'$ for which there exists a diagram
\[\xymatrix{
A_{i-1} \ar[d]_-{\iota_{i-1}} & \bigsqcup\limits_{j\in J_i}X_j \ar[l] \ar[r] \ar[d] & \bigsqcup\limits_{j\in J_i}Y_j \ar[d] \\
A_{i-1}' & \bigsqcup\limits_{j\in J_i'}X_j \ar[l] \ar[r]& \bigsqcup\limits_{j\in J_i'}Y_j
}\]
with the two unlabelled vertical maps the inclusions of sub-coproducts, corresponding to $J_i\subseteq J_i'$, and the induced map on pushouts being $\iota_i$. We denote by $|\iota|=\iota_n$ the top part of the sequence $\iota$, where $n$ is at least the length of $C$ and $C'$.

The presented finite cell complexes together with their inclusions form a directed poset (this uses the assumption on cofibrations being regular monomorphisms --- otherwise, it would not have been even a poset). For details, see \cite[Section~10.6 and Chapter~12]{Hi}. The total space functor thus provides a directed diagram in $A/\ck/B$. We form its colimit --- a factorization
\[A\to A'\to B.\]
of the map $f$. The first map can be seen to lie in $\cell(\cx)$ (e.g.\ as a consequence of Proposition~\ref{prop4.5}). Now, we will show that the second map lies in $\cx^\square$. Given a commutative square
\[\xymatrix{
X \ar[r] \ar[d] & A' \ar[d] \\
Y \ar[r] & B
}\]
with the left map in $\cx$, we use finite presentability of $X$ to factor the morphism $X\to A'=\colim_{C}|C|$ through some finite cell complex $|C|$ in the diagram. By prolonging the presentation of $C$ by one step, we construct a new presented cell complex $C'$ with $|C'|=|C|\sqcup_XY$. The composition $Y\to|C'|\to A'$ is then a diagonal in the above square.

We call this the fat small object argument, as it does not express $A'$ as a ``long'' transfinite composite, but rather as a colimit of a spread out diagram where all objects are obtained using a finite number of cells. Thus, when $A$ and all domains and codomains of $\cx$ are finitely presentable, the same applies to all objects in this diagram and $A'$ is a directed colimit of a digram of finitely presentable objects.

In the proceeding, we will characterize the ``good'' diagrams arising in the fat small object argument and show that their ``composites'' lie in $\cell(\cx)$ in general. When one comes to uncountable $\kappa$, much more care has to be taken. The diagram obtained from cell complexes with $<\kappa$ cells is not good anymore (it is still $\kappa$-directed and has the desired colimit). We will describe this general case in more detail in an appendix.

\section{Good colimits}
Recall that a poset $P$ is well-founded if every of its nonempty subsets contains a minimal element. Given $x\in P$, $\downarrow x=\{y\in P\mid y\leq x\}$ 
denotes the initial segment generated by $x$.

\begin{defi}\label{def4.1} 
We say that a poset $P$ is \textit{good} if it is well-founded and has a least element $\perp$. A good poset
is called $\kappa$-\textit{good} if all its initial segments $\downarrow x$ have cardinality $<\kappa$.
\end{defi}

Any well-ordered set is good and every finite poset with a least ele\-ment is good; in particular, the shape poset for pushout, a three-element good poset which is not a chain. The following terminology is transferred from well-ordered sets.
 
An element $x$ of a good poset $P$ is called \textit{isolated} if 
$$
\downdownarrows x=\{y\in P\mid y< x\}
$$ 
has a top element $x^-$ which is called the \textit{predecessor} of $x$. A non-isolated element distinct from $\perp$ is called \textit{limit}.
Given $x<y$ in a poset $P$, we denote $xy$ the unique morphism $x\to y$ in the category $P$.

\begin{defi}\label{def4.2} 
A diagram $D\colon P\to\ck$ is \emph{smooth} if, for every limit $x\in P$, the diagram $(D(yx)\colon Dy\to Dx)_{y<x}$ is a colimit cocone on the restriction of $D$ to $\downdownarrows x$.

A \textit{good} diagram $D\colon P\to\ck$ is a smooth diagram whose shape category $P$ is a good poset.
\end{defi}

\begin{exam}\label{ex4.3}
The canonical diagram from Section~\ref{s:finite_fsoa} is $\omega$-good.
\end{exam}

The \textit{links} in a good diagram $D\colon P\to\ck$ are the morphisms $D(x^-x)$ for the isolated elements $x\in P$. 

The following result can be found both in \cite{M} and in \cite{L}, A.1.5.6. The proof is ``the same" as for transfinite composites.

\begin{propo}\label{prop4.4} Let $(\cl,\crr)$ be a weak factorization system in a ca\-te\-go\-ry $\ck$ and $D\colon P\to\ck$ a good diagram with links 
in $\cl$. Then all components of a colimit cocone $\delta_x\colon Dx\to\colim D$ belong to $\cl$.
\end{propo}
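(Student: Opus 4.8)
The plan is to verify the defining lifting property of $\cl={}^\square\crr$ directly. Fix $x\in P$, a morphism $g\colon C\to E$ in $\crr$, and a commutative square with top edge $u\colon Dx\to C$, left edge $\delta_x$, bottom edge $v\colon\colim D\to E$ and right edge $g$; I must produce a diagonal $d\colon\colim D\to C$ with $d\delta_x=u$ and $gd=v$. By the universal property of the colimit, giving $d$ is the same as giving a cocone $(d_y\colon Dy\to C)_{y\in P}$ under $D$, and the two conditions on $d$ translate into $d_x=u$ together with $gd_y=v\delta_y$ for every $y$ (the latter forces $gd=v$ since the $\delta_y$ are jointly epic). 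So the whole problem becomes the construction of such a compatible family $(d_y)$.

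First I would fix the values that are forced. For $y\le x$, cocone compatibility with $d_x=u$ requires $d_y=u\circ D(yx)$, so I simply define these maps and record that they are mutually compatible and satisfy $gd_y=guD(yx)=v\delta_xD(yx)=v\delta_y$, using $gu=v\delta_x$ from the square and $\delta_xD(yx)=\delta_y$. For the remaining $y$ (those with $y\not\le x$) I would define $d_y$ by well-founded recursion on $P$, maintaining the invariant that $d_yD(zy)=d_z$ for all $z<y$ and that $gd_y=v\delta_y$.

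The recursion splits into the two characteristic cases of a good poset. If $y$ is isolated with predecessor $y^-$, then $d_{y^-}$ is already available and the square with top $d_{y^-}$, left the link $D(y^-y)$, bottom $v\delta_y$ and right $g$ commutes by the inductive hypothesis; since the link lies in $\cl$ and $g\in\crr$, a diagonal $d_y$ exists, and compatibility with every $z<y$ follows because such $z$ satisfy $z\le y^-$, so $D(zy)$ factors through $D(y^-y)$. If $y$ is a limit, smoothness gives $Dy=\colim D|_{\downdownarrows y}$, and the family $(d_z)_{z<y}$ is by the invariant a cocone on this restriction, hence induces a unique $d_y$; the identity $gd_y=v\delta_y$ then follows by precomposing with the jointly epic legs $D(zy)$ and using $gd_z=v\delta_z$. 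This is exactly where the two hypotheses of the proposition --- links in $\cl$ and smoothness at limits --- enter. One checks that every comparable pair $z<y$ is covered (note $z<y\le x$ forces $z\le x$, so a forced $y$ never sits above a recursively defined $z$), whence $(d_y)_{y\in P}$ is a genuine cocone over all of $P$.

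Finally, the resulting cocone produces $d\colon\colim D\to C$ with $d\delta_x=d_x=u$ and $gd=v$, so $\delta_x$ has the left lifting property against every $g\in\crr$, i.e.\ $\delta_x\in{}^\square\crr=\cl$. I expect the limit step --- extracting the induced map from smoothness and discharging $gd_y=v\delta_y$ through the joint-epi property, together with the bookkeeping needed to treat an arbitrary $x$ rather than only $\perp$ --- to be the main obstacle, the isolated step being a routine single lift against a link.
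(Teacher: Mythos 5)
Your proof is correct and takes essentially the same route as the paper's: a well-founded recursion over $P$ constructing a compatible cocone into $C$, obtaining the component at an isolated element by lifting the link against $g\in\crr$ and at a limit element from smoothness. The only (harmless) difference is that you handle a general $x$ directly by forcing the values $d_y=uD(yx)$ on $\downarrow x$ and recursing on the rest, whereas the paper first reduces to the case $x=\perp$ by invoking the principal filter $\uparrow x$.
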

\begin{proof}
Since the principal filter $\uparrow x$ is good for each $x\in P$, it suffices to show that the component $\delta_\perp\colon D\perp\to\colim D$ belongs 
to $\cl$. Consider
$$
\xymatrix{
D\perp \ar[r]^-{u} \ar[d]_-{\delta_\perp}& X \ar[d]^-g\\
\colim D\ar[r]_-{v} & Y
}
$$
with $g\in\crr$. It suffices to construct a compatible cocone $d_x\colon Dx\to X$ such that $d_\perp=u$ and $gd_x=v\delta_x$ for each $x\in P$.
Then the induced morphism $d\colon\colim D\to X$ is the desired diagonal in the square above. Since $P$ is well-founded, we can proceed by recursion.
Assume that we have $d_y$ for each $y<x$. If $x$ is limit we get $d_x$ as induced by the cocone $(d_y)_{y<x}$. If $x$ is isolated we get $d_x$ 
as the diagonal in the square
\[\xy*!D!<0pt,10pt>\xybox{\xymatrix{
Dx^- \ar[r]^-{d_{x^-}} \ar[d]_-{D(x^-x)}& X \ar[d]^-g\\
Dx\ar[r]_-v & Y
}}\endxy\qedhere\]
\end{proof}

The \textit{composite} of a good diagram $D\colon P\to\ck$ is the component $\delta_\perp$ of a colimit cocone. A \textit{good composite} of morphisms 
from $\cx$ is the composite of a good diagram with links in $\cx$. The just proved proposition says that $\cl$ is closed under good composites. 
This proposition can be strengthened as follows.
 
\begin{propo}\label{prop4.5}
Let $\cx$ be a class of morphisms in a cocomplete category $\ck$. Then the composite of a good diagram in $\ck$ with links in $\Po(\cx)$
belongs to $\cell(\cx)$.
\end{propo}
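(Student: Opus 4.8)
The plan is to \emph{linearise} the good poset $P$ into a transfinite chain of down-sets and then read off $\delta_\perp$ as a transfinite composite of pushouts of the links. Since $P$ is well-founded, its rank function induces a well-order $\preceq$ refining the order of $P$; enumerate $P=\{p_\alpha\}_{\alpha<\lambda}$ along $\preceq$, so that $p_\alpha<p_\beta$ in $P$ forces $\alpha<\beta$, and $p_0=\perp$ (the unique minimal element). For $1\le\alpha\le\lambda$ put $P_\alpha=\{p_\beta\mid\beta<\alpha\}$. Order-compatibility makes each $P_\alpha$ a down-set in $P$, with $P_1=\{\perp\}$ and $P_\lambda=P$. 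Setting $D_\alpha=\colim(D|_{P_\alpha})$, the canonical comparison maps assemble into a chain $(D_\alpha)_{1\le\alpha\le\lambda}$ that runs from $D_1=D\perp$ to $D_\lambda=\colim D$ and whose composite is exactly $\delta_\perp$.

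I would then verify that this chain is a transfinite composite of pushouts of members of $\cx$. At a limit stage $\alpha$ one has $P_\alpha=\bigcup_{\beta<\alpha}P_\beta$, so the canonical map $\colim_{\beta<\alpha}D_\beta\to D_\alpha$ is an isomorphism and the chain is smooth. For the successor stage the key point is the pushout formula for adjoining the maximal element $p_\alpha$ to the down-set $P_\alpha$: writing $j\colon\colim(D|_{\downdownarrows p_\alpha})\to Dp_\alpha$ for the comparison map (note $\downdownarrows p_\alpha\subseteq P_\alpha$), a direct check of universal properties shows that $D_\alpha\to D_{\alpha+1}$ is the pushout of $j$ along $\colim(D|_{\downdownarrows p_\alpha})\to D_\alpha$. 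If $p_\alpha$ is isolated, then $\downdownarrows p_\alpha=\downarrow p_\alpha^-$ has top element $p_\alpha^-$, hence $j$ is the link $D(p_\alpha^-p_\alpha)\in\Po(\cx)$; pasting of pushouts then gives $D_\alpha\to D_{\alpha+1}\in\Po\Po(\cx)=\Po(\cx)$. If $p_\alpha$ is limit, smoothness of $D$ makes $j$ an isomorphism, so $D_\alpha\to D_{\alpha+1}$ is an isomorphism, which again lies in $\Po(\cx)$.

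Combining the two, $\delta_\perp$ is the composite of a smooth chain whose every successor map lies in $\Po(\cx)$, i.e.\ an element of $\Tc\Po(\cx)=\cell(\cx)$, which is the assertion (after a harmless reindexing of the chain to begin at $0$). I expect the main obstacle to be the successor stage: proving the ``adjoin a maximal element'' pushout formula and observing that it specialises to a link in the isolated case and, via smoothness of $D$, to an isomorphism in the limit case. The two supporting facts are the idempotence $\Po\Po(\cx)=\Po(\cx)$ (pasting of pushouts) and the existence of an order-compatible well-ordering of a well-founded poset; both are routine, so the genuine content is the bookkeeping that keeps the partial colimits $D_\alpha$ compatible as $\alpha$ increases.
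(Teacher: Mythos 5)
Your proposal is correct and follows essentially the same route as the paper: extend the partial order of $P$ to a compatible well-order, pass to the chain of partial colimits over initial segments, and identify each successor step as a pushout of the comparison map $\colim(D|_{\downdownarrows p_\alpha})\to Dp_\alpha$, which is the link when $p_\alpha$ is isolated and an isomorphism (by smoothness) when $p_\alpha$ is a limit element. The paper packages the successor step as a pushout of posets inducing a pushout of colimits, but this is the same computation you describe.
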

\begin{proof}
Let $D\colon P\to\ck$ be a good diagram with links in $\Po(\cx)$. Let $\preceq$ be a well-ordering of $P$ extending its partial ordering $\leq$
(see \cite{J}, 1.2, Theorem 5). Let $Q$ consist of all non-empty initial segments of $(P,\preceq)$. Then $(Q,\subseteq)$ is a well-ordered set. Consider the diagram $E\colon Q\to\ck$ such that $ES$ is a colimit of the restriction of $D$ to $S$ 
and $E_{SS'}\colon ES\to ES'$ is the induced morphism. It is easy to see that $E$ is a smooth transfinite sequence and by definition, $\colim D\cong EP$.

It remains to show that links of $E$ belong to $\Po(\cx)$. These links 
are precisely $E[x)\to E[x]$, where $[z]=\{y\in P\mid y\preceq z\}$ and $[z)=\{y\in P\mid y\prec z\}$. Treating both as subposets of $P$, we have a pushout diagram of categories and an induced pushout diagram of colimits in $\ck$:
$$
\xymatrix{
[x) \ar[r]^{} & [x] \po & E[x) \ar[r]^{E([x)[x])} & E[x] \po \\
\downdownarrows x \ar [u]^{} \ar [r]_{} & \downarrow x \ar[u]_{} & E(\downdownarrows x) \ar [u]^{} \ar [r] & E(\downarrow x) \ar[u]_{}
}
$$
When $x$ is isolated, the bottom map is $D(x^-x)$ and when $x$ is limit, it is an isomorphism. In both cases $E([x)[x])$ belongs to $\Po(\cx)$.
\end{proof}

The class of good composites of diagrams with links in $\cx$ will be denoted $\Gd(\cx)$. Analogously, $\Gdcard\kappa(\cx)$ denotes $\kappa$-good
composites with links in $\cx$.  A transfinite composite is $\kappa$-good if and only if it is of length $\leq\kappa$.

\begin{rem}\label{re4.6}
The proposition above can be refined as follows.

Let $\lambda$ be an infinite cardinal. Then the composite of a $\kappa$-good diagram of cardinality $<\lambda$ with links in $\Po(\cx)$ belongs to $\Tcsh\lambda\Po(\cx)$.

Moreover, if $\kappa$ is regular and $\lambda\geq\kappa$, then this composite can be expressed as a transfinite composite of length exactly $\lambda$ with links in $\Po(\cx)$. This follows from the fact that the well-ordering of $P$ from the proof of \ref{prop4.5} can be chosen isomorphic to the ordinal $\lambda$ (see \cite{J}, Theorem~5).
\end{rem}

\begin{notation}\label{not4.7}
{
Let $D\colon P\to\ck$ be a good diagram and $Q$ an initial segment of $P$. Then $\colim_Q D$ will denote the colimit of the restriction of $D$ on $Q$.
}
\end{notation}

\begin{rem}\label{re4.8}
As with most of our statements, there is also a \emph{relative} version: given an initial segment $Q\subseteq P$ and a diagram $D\colon P\to\ck$ 
such that the links $D(x^-x)$ lie in $\Po(\cx)$ for all $x\in P\smallsetminus Q$, the induced map on colimits $\colim_QD\to\colim D$ belongs 
to $\cell(\cx)$. The proof is the same, only with all the elements of $Q$ ignored. A particularly simple case of this relative version is the following 
lemma.
\end{rem}

\begin{lemma}\label{le4.9}
Let $D\colon P\to\ck$ be a good diagram and let $Q\subseteq P$ be an initial segment. If all the elements in $P\smallsetminus Q$ are limit, 
the induced map $\colim_QD\to\colim D$ is an isomorphism.\qed
\end{lemma}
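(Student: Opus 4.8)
The plan is to prove the isomorphism by exhibiting $\colim_Q D$ itself, suitably re-equipped with a cocone on all of $D$, as a colimit of $D$ over the whole of $P$. Write $C=\colim_Q D$ with colimit cocone $(\gamma_x\colon Dx\to C)_{x\in Q}$. Once I show that $C$ carries a cocone on $D$ (indexed by all of $P$) that is a colimit cocone, uniqueness of colimits forces the canonical comparison map $C\to\colim D$ to be an isomorphism; and that comparison map is exactly the induced map $\colim_Q D\to\colim D$ of the statement, since both are characterized by compatibility with the cocone components over $Q$.

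First I would extend $(\gamma_x)_{x\in Q}$ to a cocone $(\gamma_x)_{x\in P}$ by well-founded recursion, which is available because the good poset $P$ is well-founded. For $x\in Q$ the map $\gamma_x$ is already given. For $x\in P\smallsetminus Q$ the hypothesis forces $x$ to be a limit element, so smoothness of $D$ identifies $Dx$ with $\colim_{\downdownarrows x}D$; assuming $\gamma_y$ has been defined for all $y<x$ and that these form a cocone, I define $\gamma_x$ as the unique map out of $Dx$ induced by $(\gamma_y)_{y<x}$. I then check that the full family is a cocone, i.e.\ $\gamma_{x'}\circ D(xx')=\gamma_x$ whenever $x<x'$. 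Since $Q$ is an initial segment, $x'\in Q$ forces $x\in Q$ and the identity reduces to the cocone property on $Q$; the remaining case $x'\in P\smallsetminus Q$ is precisely the compatibility built into the defining universal property of $\gamma_{x'}$.

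Next I would verify the universal property of this extended cocone. Given any cocone $(\phi_x\colon Dx\to Z)_{x\in P}$, its restriction to $Q$ factors uniquely through $C$ via a map $h\colon C\to Z$ with $h\gamma_x=\phi_x$ for all $x\in Q$. I claim this same $h$ satisfies $h\gamma_x=\phi_x$ for \emph{all} $x\in P$, which I prove by well-founded induction: at a limit $x\in P\smallsetminus Q$, both $h\gamma_x$ and $\phi_x$ are maps out of $Dx=\colim_{\downdownarrows x}D$ whose precomposition with each $D(yx)$ ($y<x$) equals $h\gamma_y=\phi_y$ (inductive hypothesis) and $\phi_y$ respectively, so they coincide by the universal property of that colimit. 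Uniqueness of $h$ is immediate from the universal property of $\colim_Q D$ applied over $Q$. Hence $C$ with the extended cocone is $\colim D$, and the induced map is an isomorphism.

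The single point that genuinely requires care, and the crux of the argument, is the use of the hypothesis that every element of $P\smallsetminus Q$ is limit: it is exactly this that makes each newly adjoined object $Dx$ a colimit of strictly smaller objects, so that the recursion can propagate the factorization through $C$. Smoothness gives no grip on isolated elements, where $Dx$ is only linked to $Dx^-$ and is otherwise unconstrained; without the limit assumption there would be no canonical way to define $\gamma_x$ at such an $x$, and the statement would fail.
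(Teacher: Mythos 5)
Your proof is correct. Note that the paper offers no proof of this lemma at all: it is stated with a \qed as ``a particularly simple case'' of the relative version described in Remark~\ref{re4.8}, whose implicit argument runs through the rearrangement of Proposition~\ref{prop4.5} --- extend $\leq$ to a well-ordering, form the transfinite sequence $Q'\mapsto\colim_{Q'}D$ over initial segments, and observe from the pushout computation there that every link indexed by a limit element of $P\smallsetminus Q$ is an isomorphism, so the composite $\colim_QD\to\colim D$ is one too. Your route is genuinely different and more elementary: you extend the colimit cocone of $\colim_QD$ over all of $P$ by well-founded recursion (using smoothness to define the component at each limit $x\in P\smallsetminus Q$) and verify the universal property directly, which avoids both the well-ordering extension and the colimit-of-pushouts bookkeeping. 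You also correctly isolate where the hypothesis enters: at an isolated $x\in P\smallsetminus Q$ there would be no canonical way to define the new cocone component. The one point worth making explicit is that the definition of $\gamma_x$ at a limit $x$ and the verification that $(\gamma_y)_{y<x}$ is a cocone on $\downdownarrows x$ must be carried out by a single simultaneous well-founded induction, since the former presupposes the latter; you acknowledge this, and it causes no difficulty. (Incidentally, the hypothesis forces $\bot\in Q$, since $\bot$ is not a limit element, so $\colim_QD$ is a colimit over a nonempty initial segment, as your argument tacitly requires.)
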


Transfinite composites are thin and long and are used for a weak factorization of a morphism $h$. This procedure is called a ``small object 
argument". We will show how to convert a transfinite composite into a fat and short good composite. Our procedure can be called
a \textit{fat small object argument}.  

First we will prove an auxiliary lemma.

\begin{lemma}\label{le4.10}
Let $D\colon P\to\ck$ be a $\kappa$-good diagram. Then there exists its extension $D^*\colon P^*\to\ck$ to a $\kappa$-good $\kappa$-directed diagram. 
In this extension, $P\subseteq P^*$ is an initial segment and all the elements in $P^*\smallsetminus P$ are limit. In particular, the links of $D^*$ 
are exactly those of $D$ and the natural map $\colim D\to\colim D^*$ is an isomorphism.
\end{lemma}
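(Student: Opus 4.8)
The plan is to build $P^*$ by transfinite recursion, repeatedly adjoining formal upper bounds for those $\kappa$-small subsets that do not yet have one, arranging that every adjoined element is limit. Once such a $P^*$ is in hand, the remaining assertions come essentially for free. Since $P$ is an initial segment of $P^*$ and every element of $P^*\smallsetminus P$ is limit, Lemma~\ref{le4.9} gives at once that the natural map $\colim D\to\colim D^*$ is an isomorphism; and since no new element is isolated while the predecessor sets of old elements are unchanged, the isolated elements of $P^*$ are exactly those of $P$ with the same predecessors, so the links coincide. The extension $D^*$ is moreover forced: smoothness (Definition~\ref{def4.2}) requires $D^*z=\colim_{\downdownarrows z}D^*$ at every limit $z$, and as all new elements are limit this prescribes $D^*$ on $P^*\smallsetminus P$ by recursion on rank, the comparison maps being the canonical coprojections (functoriality being routine). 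Thus the whole content lies in the poset-theoretic construction.

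For the construction, I would set $P_0=P$ and, given $P_\alpha$, let $P_{\alpha+1}=P_\alpha\cup\{z_S\}$, where $S$ ranges over the $\kappa$-small subsets of $P_\alpha$ admitting no upper bound in $P_\alpha$, each $z_S$ being a fresh element declared to lie strictly above exactly $\bigcup_{s\in S}\downarrow s$ and to be incomparable to every other element adjoined at the same stage; at limit stages I would take unions. Assuming, as one must for these notions, that $\kappa$ is regular, I claim $P^*=P_\kappa=\bigcup_{\alpha<\kappa}P_\alpha$ works. It is $\kappa$-directed: any $T\subseteq P^*$ with $|T|<\kappa$ lies in some $P_\alpha$, $\alpha<\kappa$ (regularity), and if $T$ has no upper bound there then $z_T\in P_{\alpha+1}$ is one. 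That $\perp$ stays least and that $P$ stays an initial segment follow because an element adjoined at some stage is never $\le$ anything already present before that stage; the same remark gives $\downarrow_{P^*}s=\downarrow_{P_\alpha}s$ for $s\in P_\alpha$, so $\downarrow z_S=\{z_S\}\cup\bigcup_{s\in S}\downarrow s$ is a union of $<\kappa$ sets of size $<\kappa$, hence of size $<\kappa$ by regularity. Finally each $z_S$ is limit, since its set of strict predecessors $\bigcup_{s\in S}\downarrow s$ has no greatest element precisely because $S$ has no upper bound in $P_\alpha$.

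The hard part is well-foundedness, and it is exactly what dictates the shape of the recursion. The naive candidate---the poset of all $\kappa$-small initial segments of $P$ under inclusion, which is manifestly $\kappa$-directed with the right colimit---fails to be well-founded as soon as $\kappa>\aleph_0$: over an infinite antichain $\{a_i\}$ the tails $\{a_i\mid i\ge n\}$ form an infinite strictly descending chain of initial segments. This is precisely the phenomenon flagged in Section~\ref{s:finite_fsoa}, where finite subsets succeed but $<\kappa$ subsets do not. The repair is to carry a strictly monotone ordinal rank along the recursion: I would assign every element adjoined at stage $\alpha+1$ the common rank $\lambda_\alpha=\sup\{\rho(y)+1\mid y\in P_\alpha\}$, where $\rho$ restricts to the given well-founded rank on $P$. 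Because all elements of a single stage share one rank and are mutually incomparable, while each lies strictly above material of strictly smaller rank, the generated relation is antisymmetric and $\rho$ strictly increases along $<$; a strictly monotone map to the ordinals then forces well-foundedness, so that $P^*$ is $\kappa$-good. Verifying that the stagewise relations assemble into a genuine partial order is then a routine check against this rank.
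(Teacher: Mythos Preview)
Your proof is correct and follows essentially the same approach as the paper: both iterate, $\kappa$ times, the step of adjoining formal limit upper bounds for $\kappa$-small subsets (the paper uses initial segments without a greatest element rather than subsets without an upper bound, a harmless reparametrization). The paper simply asserts that the result is ``still $\kappa$-good'' without comment, whereas you supply the well-foundedness argument explicitly via a rank function; note that the stage-of-introduction already serves as such a rank, so your explicit $\lambda_\alpha$ construction, while correct, is a bit more than is needed.
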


\begin{proof}
We will construct $P^*$ and $D^*$ by iterating transfinitely the following construction. Let $P^+$ consists of adding, for each initial segment 
$S\subseteq P$ of cardinality $<\kappa$ without a greatest element, an element $p_S$ such that $s<p_S$ for each $s\in S$. The added elements $p_S$ 
are incomparable among themselves. The extension $D^+\colon P^+\to\ck$ is given by $D^+(p_S)=\colim_S D$. Thus, there are no new links in $P^+$ 
and $D^+$ is still $\kappa$-good. Define inductively $P^\gamma$ as $P^0=P$, $P^{\gamma+1}=(P^\gamma)^+$ and $P^\gamma=\bigcup_{\eta<\gamma}P^\eta$ 
for a limit $\gamma$. The diagrams $D^\gamma$ are defined in a similar fashion. We set $P^*=P^\kappa$ and $D^*=D^\kappa$. Since every subset of $P^*$ 
of cardinality $<\kappa$ lies in some $P^\gamma$, $\gamma<\kappa$, it has an upper bound in $P^{\gamma+1}$. Consequently, $P^*$ is $\kappa$-directed; 
it is still $\kappa$-good.
\end{proof}

\begin{theo}\label{th4.11}
Let $\ck$ be a cocomplete category and $\cx$ a class of morphisms with $\kappa$-presentable domains. Then any morphism
from $\cell(\cx)$ is a composite of a $\kappa$-good $\kappa$-directed diagram with links in $\Po(\cx)$.
\end{theo}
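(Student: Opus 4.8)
The plan is to reduce, via Lemma~\ref{le4.10}, to the construction of a single $\kappa$-good (not yet $\kappa$-directed) diagram with links in $\Po(\cx)$ whose composite is the given map. First I would unfold a morphism $f\in\cell(\cx)=\Tc\Po(\cx)$ as a smooth chain $(A_i)_{i\le\lambda}$ with $A_0$ the domain, $A_\lambda$ the codomain, $f=f_{0\lambda}$, and each link $A_i\to A_{i+1}$ a chosen pushout of some $g_i\colon X_i\to Y_i$ in $\cx$; by hypothesis every $X_i$ is $\kappa$-presentable. If $\lambda\le\kappa$ the chain is already $\kappa$-good (as recorded after Proposition~\ref{prop4.5}) and Lemma~\ref{le4.10} finishes immediately, so the whole point is the case $\lambda>\kappa$, where the chain is good but has initial segments of size $\ge\kappa$.

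The key localization step uses $\kappa$-presentability together with regularity of $\kappa$. By the standard accessibility of cell complexes one shows, by induction on $i$, that each $A_i$ is the $\kappa$-filtered colimit of the subcomplexes built from $<\kappa$ of the cells $\{j<i\}$; consequently the attaching map $X_i\to A_i$ factors through some $\kappa$-small subcomplex on a set $S_i\subseteq\{j<i\}$. Declaring $j<i$ whenever $j\in S_i$ and passing to the transitive closure produces a partial order $P_0$ on the set of cells; it refines the ordinal order, hence is well-founded, and an easy induction using regularity of $\kappa$ shows that every principal down-set $\downarrow i$ has cardinality $<\kappa$, being the union over the $<\kappa$ elements of $S_i$ of the sets $\downarrow j$, each already of size $<\kappa$. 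Adjoining a least element $\perp$ with value the domain $A$ gives a candidate shape poset with $\kappa$-small initial segments.

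The obstacle, and the real content, is smoothness. Setting $D(i)=\colim$ of the subcomplex on $\downarrow i$ makes every \emph{isolated} cell $i$ (one whose strict down-set has a greatest element) attach by a single pushout of $g_i$, hence a link in $\Po(\cx)$; but a cell whose dependency set $S_i$ has several maximal elements lands at a \emph{limit} position, where smoothness would wrongly demand that its cell not be attached at all. I would repair this by the same device used in the proof of Lemma~\ref{le4.10}: insert below each such cell a limit marker carrying the colimit of its strict down-set, so that the cell again sits at an isolated position with that marker as predecessor, while the marker realizes the required colimit cocone. The resulting $D\colon P\to\ck$ is then genuinely smooth, its shape $P$ is good (well-founded, with least element $\perp$) and $\kappa$-good (the markers add only subcolimits of already-$\kappa$-small down-sets, so initial segments stay $<\kappa$), and all links are pushouts of single $g_i$ and hence lie in $\Po(\cx)$.

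Finally I would check that the composite of $D$ is $f$, i.e.\ that $\colim_P D\cong A_\lambda$ over $A$. Here one must argue that the branch-merging built into $P$ (all elements connected downward through $\perp$, and multi-dependencies resolved by the inserted markers) creates neither spurious identifications nor missing cells; the cleanest route is to well-order $P$ compatibly with the original chain and invoke the argument of Proposition~\ref{prop4.5}, which re-expresses $\colim_P D$ as a transfinite composite attaching each $g_i$ exactly once and therefore reconstructs $A_\lambda$. With a $\kappa$-good diagram of composite $f$ and links in $\Po(\cx)$ in hand, Lemma~\ref{le4.10} extends it to a $\kappa$-good $\kappa$-directed diagram with the same links and isomorphic colimit, which is exactly the assertion. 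I expect the smoothness repair, together with the verification $\colim_P D\cong A_\lambda$, to be the main difficulty; the localization and the cardinality bookkeeping are routine once $\kappa$ is taken regular.
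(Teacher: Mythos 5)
Your argument is correct in outline but follows a genuinely different route from the paper's. The paper proves Theorem~\ref{th4.11} by transfinite recursion on the length of the given chain, with the inductive invariant that each partial composite $f_{0\beta}$ is \emph{already} presented as a $\kappa$-good $\kappa$-directed diagram $D_\beta$: at a successor step the attaching map $X\to A_\beta\cong\colim D_\beta$ factors through a single vertex $D_\beta x$ simply because $X$ is $\kappa$-presentable and $D_\beta$ is $\kappa$-directed, and one then adjoins the whole ray of pushouts $D_\beta y\sqcup_X Y$ for $y\geq x$ (one new isolated element at $y=x$, all the others limit) so as to preserve $\kappa$-directedness; limit steps apply the ${}^*$-construction of Lemma~\ref{le4.10}. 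You instead build one global dependency poset of cells, which is closer to Hirschhorn's presented cell complexes and to the heuristic discussion in Section~\ref{s:finite_fsoa}. The trade-off is that your localization step --- ``$A_i$ is the $\kappa$-filtered colimit of the subcomplexes on $<\kappa$ cells, hence $X_i\to A_i$ factors through one of them'' --- is not an off-the-shelf fact in this generality: in an arbitrary cocomplete $\ck$ with no monomorphism hypotheses a subcomplex is presented data rather than a subobject, the admissible cell-sets must be shown to form a $\kappa$-filtered poset, and the identification of their colimit with $A_i$ is itself a rearrangement statement of the same nature as the theorem being proved. Consequently your three stages (choice of the $S_i$, the smoothness repair by limit markers, and the verification $\colim_PD\cong A_\lambda$) cannot be run sequentially as written: $S_i$ can only be chosen once the colimit identification is known for all $j\leq i$, so everything must be organized as a single simultaneous induction on $i$. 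Done that way your proof goes through (the cardinality bookkeeping via regularity of $\kappa$ and the marker device are fine, and the final appeal to Lemma~\ref{le4.10} is exactly as in the paper), but the step you call routine is where all the work lives; the paper's recursion buys precisely the avoidance of that subcomplex lemma, at the price of a less explicit, stage-by-stage shape poset.
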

\begin{proof}
Let $f\in\cell(\cx)$. There is a smooth chain $(f_{\beta\alpha}\colon A_\beta\to A_\alpha)_{\beta\leq\alpha\leq\lambda}$ with links in $\Po(\cx)$ such that $f=f_{0\lambda}$.
We will proceed by recursion and prove that each $f_{0\alpha}$, $\alpha\leq\lambda$ is a composite of a $\kappa$-good $\kappa$-directed diagram 
$D_\alpha\colon P_\alpha\to\ck$ with links in $\Po(\cx)$. Moreover, $D_\beta$ is the restriction of $D_\alpha$ on the initial segment $P_\beta\subseteq P_\alpha$
for each $\beta<\alpha\leq\lambda$. We put $P_\alpha=\alpha +1$ for $\alpha <\kappa$, $P_\kappa=\kappa$ and, in both cases, $D_\alpha$
is the restriction of our chain to $P_\alpha$, $\alpha\leq\kappa$. Let $\kappa<\alpha$ and assume that the claim holds for each $\beta<\alpha$. 

Let $\alpha=\beta +1$. Then $f_{\beta\alpha}$ is a pushout
$$
\xymatrix{
A_\beta \ar[r]^{f_{\beta\alpha}} & A_\alpha \po \\
X \ar [u]^{u} \ar [r]_{g} & Y \ar[u]_{v}
}
$$
with $g$ in $\cx$. Since $X$ is $\kappa$-presentable and $D_\beta$ is $\kappa$-directed, the morphism $u\colon X\to A_\beta\cong\colim D_\beta$ factors through some $u_x\colon X\to D_\beta x$. For  $x\leq y$, we denote $u_y=D_\beta(xy)u_x$. Take pushouts
$$
\xymatrix{
D_\beta y \ar[r]^{f_{\beta y}} & A_y \po \\
X \ar [u]^{u_y} \ar [r]_{g} & Y \ar[u]_{v_y}
}
$$
By adding to the diagram $D_\beta$ the objects $A_y$, for $x\leq y$, and the obvious morphisms 
$f_{\beta y}\colon D_\beta y\to A_y$ and $A_y\to A_z$, for $x\leq y<z$, we obtain a $\kappa$-good $\kappa$-directed 
diagram $D_\alpha$. Clearly, $P_\beta\subseteq P_\alpha$ is an initial segment with a single new isolated element corresponding to $A_x$. The colimit of this diagram is $A_\alpha$. 

Let $\alpha$ be a limit ordinal and $Q$ the union of $P_\beta$, $\beta<\alpha$. Since, for $\gamma<\beta<\alpha$, $P_\gamma\subseteq P_\beta$ is an initial segment, this union $Q$ is $\kappa$-good but not necessarily $\kappa$-directed. Denoting by $E\colon Q\to\ck$ the union of $D_\beta$, $\beta<\alpha$, we define $P_\alpha=Q^\ast$ and $D_\alpha=E^*$ using the previous lemma. The links of $D_\alpha$ are those of $E$, i.e.\ those of $D_\beta$, $\beta<\alpha$, and, in particular, they lie in $\Po(\cx)$. The colimit of $D_\alpha$ is $\colim E=\colim_{\beta<\alpha}\colim D_\beta=\colim_{\beta<\alpha}A_\beta=A_\alpha$.
\end{proof}

Let $\GdDircard\kappa(\cx)$ denote the collection of all $\kappa$-good $\kappa$-directed composites with links in $\cx$. 

\begin{coro}\label{cor4.12}  
Let $\ck$ be a cocomplete category and $\cx$ a class of morphisms in $\ck_\kappa$. Then $\cell(\cx)=\GdDircard\kappa\Po(\cx)$.
\end{coro}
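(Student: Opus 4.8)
The plan is to establish the two inclusions separately; each follows immediately from one of the two preceding results, so the corollary is essentially a repackaging of Theorem~\ref{th4.11} and Proposition~\ref{prop4.5}.

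First I would treat the inclusion $\cell(\cx)\subseteq\GdDircard\kappa\Po(\cx)$. Since $\cx\subseteq(\ck_\kappa)^2$, every morphism in $\cx$ has $\kappa$-presentable domain, so the hypotheses of Theorem~\ref{th4.11} are satisfied. That theorem asserts precisely that each morphism of $\cell(\cx)$ arises as the composite of a $\kappa$-good $\kappa$-directed diagram with links in $\Po(\cx)$, which is to say that it lies in $\GdDircard\kappa\Po(\cx)$.

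For the reverse inclusion $\GdDircard\kappa\Po(\cx)\subseteq\cell(\cx)$, I would observe that a $\kappa$-good $\kappa$-directed diagram is in particular a good diagram: by Definition~\ref{def4.1} a $\kappa$-good poset is good (well-founded with a least element), and $\kappa$-directedness is merely an additional property of the shape that does not remove the diagram from the scope of Proposition~\ref{prop4.5}. That proposition then applies verbatim and shows that the composite of any such diagram with links in $\Po(\cx)$ belongs to $\cell(\cx)$. Combining the two inclusions yields the desired equality. I expect no genuine obstacle at this stage, since the substantive content has already been isolated: the forward inclusion rests on the recursive construction of Theorem~\ref{th4.11} (which at limit stages invokes Lemma~\ref{le4.10} to pass from a merely $\kappa$-good diagram to a $\kappa$-directed one), while the reverse inclusion is the ``good composites of pushouts are cellular'' statement of Proposition~\ref{prop4.5}.
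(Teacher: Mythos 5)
Your proof is correct and follows essentially the same route as the paper, whose entire proof is a one-line citation of two earlier results: Theorem~\ref{th4.11} for the inclusion $\cell(\cx)\subseteq\GdDircard\kappa\Po(\cx)$ and a converse coming from the ``good composites of pushouts are cellular'' statement. If anything you are slightly more precise than the paper, which cites Proposition~\ref{prop4.4} for the reverse inclusion --- that proposition would only place the composite in the left class of the weak factorization system, i.e.\ in $\cof(\cx)$ --- whereas Proposition~\ref{prop4.5}, the one you invoke, is what is actually needed to conclude membership in $\cell(\cx)$.
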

\begin{proof}
It follows from Proposition~\ref{prop4.4} and Theorem~\ref{th4.11}. 
\end{proof}

\begin{coro}\label{cor4.13}  
Let $\ck$ be a cocomplete category and $\cx$ a class of morphisms in $\ck_\kappa$. Then a morphism with the domain in $\ck_\kappa$
belongs to $\cell(\cx)$ if and only if it belongs to $\GdDircard\kappa\Po_\kappa(\cx)$.
\end{coro}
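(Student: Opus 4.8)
The ``if'' direction is immediate and does not even use the hypothesis on the domain: a morphism in $\GdDircard\kappa\Po_\kappa(\cx)$ is the composite of a $\kappa$-good (in fact $\kappa$-directed) diagram whose links lie in $\Po_\kappa(\cx)\subseteq\Po(\cx)$, so Proposition~\ref{prop4.5} places it in $\cell(\cx)$.

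For the ``only if'' direction, let $f\in\cell(\cx)$ have domain $A\in\ck_\kappa$. By Corollary~\ref{cor4.12} we may write $f$ as the composite $\delta_\perp$ of a $\kappa$-good $\kappa$-directed diagram $D\colon P\to\ck$ with links in $\Po(\cx)$; since the composite is the component $\delta_\perp\colon D\perp\to\colim D$, we automatically have $D\perp=\dom f=A$. The only gap between this and the desired conclusion is that the links lie \emph{a priori} only in $\Po(\cx)$ rather than in $\Po_\kappa(\cx)$; equivalently, we must show that \emph{every} object $Dx$ is $\kappa$-presentable.

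The plan is a well-founded induction over $P$, which is legitimate because $P$ is good, hence well-founded. The engine is the standard closure property that in any category the $\kappa$-presentable objects are closed under $\kappa$-small colimits, i.e.\ colimits of diagrams with fewer than $\kappa$ morphisms; this rests on $\kappa$-small limits commuting with $\kappa$-filtered colimits in $\Set$, so regularity of $\kappa$ is used at this point. The base case is $D\perp=A\in\ck_\kappa$. If $x$ is isolated, the link $D(x^-x)\in\Po(\cx)$ is either an isomorphism, so that $Dx\cong Dx^-\in\ck_\kappa$, or a pushout of some $g\in\cx$ along a map into $Dx^-$; since $\dom g,\cod g\in\ck_\kappa$ and $Dx^-\in\ck_\kappa$ by the inductive hypothesis, the pushout $Dx$, being a colimit of a span, is $\kappa$-presentable, and the link, now a pushout of $g$ between $\kappa$-presentable objects, lies in $\Po_\kappa(\cx)$. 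If $x$ is limit, smoothness of the good diagram gives $Dx=\colim_{y<x}Dy$; as $D$ is $\kappa$-good the initial segment $\downarrow x$ has cardinality $<\kappa$, so $\downdownarrows x$ carries fewer than $\kappa$ morphisms and $Dx$ is a $\kappa$-small colimit of the $\kappa$-presentable objects $Dy$, hence $\kappa$-presentable (no new links arise at a limit stage).

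This completes the induction: all $Dx$ are $\kappa$-presentable and all links lie in $\Po_\kappa(\cx)$, so $D$ exhibits $f$ as an element of $\GdDircard\kappa\Po_\kappa(\cx)$. I expect the main obstacle to be nothing more than the careful uniform application of the closure of $\kappa$-presentable objects under $\kappa$-small colimits at both the isolated (pushout) and the limit stages; everything else is bookkeeping on the good poset $P$, and the crucial quantitative input is the $\kappa$-goodness bound $|\!\downarrow\! x|<\kappa$ together with the regularity of $\kappa$.
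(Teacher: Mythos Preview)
Your argument is correct and follows the same route as the paper: apply Corollary~\ref{cor4.12} (or equivalently Proposition~\ref{prop4.5}) for one direction, and for the other run a well-founded induction over the $\kappa$-good poset to show that all $Dx$ are $\kappa$-presentable, using closure of $\ck_\kappa$ under $\kappa$-small colimits at both isolated and limit stages. The paper merely calls this induction ``easy''; your write-up spells it out in the expected way.
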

\begin{proof}
Let $D\colon P\to\ck$ be a $\kappa$-good diagram with a $\kappa$-presentable $D\bot$. Then all objects in the diagram $D$ are $\kappa$-presentable too: this can be seen by an easy induction on the well-founded partial ordering on $P$. The rest follows from Corollary~\ref{cor4.12}.
\end{proof}

\begin{rem}\label{re4.14}
(1) Clearly, all objects $Dx$, $x\in P$ in the $\kappa$-good diagram from the proof above are $\kappa$-presentable.

(2) The limit step in the proof of \ref{th4.11} is much simpler for $\kappa=\aleph_0$ because $Q$ is directed and thus we may take $Q^\ast=Q$.
\end{rem}

In the rest of the section we investigate cellular maps and cofibrations which are small in some respect. There are 
two possible interpretations --- either they are between $\kappa$-presentable objects or the involved transfinite composite has length $<\kappa$. We describe the relationship between these two notions of smallness.

\begin{lemma}\label{le4.15}
Let $\ck$ be a cocomplete category and $\cx$ a class of morphisms in $\ck_\kappa$ with $\kappa$ uncountable. Then 
$\cell_\kappa(\cx)=\Tcsh\kappa\Po_\kappa(\cx)$.
\end{lemma}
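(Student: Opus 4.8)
The plan is to establish the two inclusions separately, the inclusion $\Tcsh\kappa\Po_\kappa(\cx)\subseteq\cell_\kappa(\cx)$ being the routine one. A member of the left-hand side is the composite $f=f_{0\lambda}$ of a smooth chain $(A_i)_{i\le\lambda}$ of length $\lambda<\kappa$ with every link $f_{i,i+1}\in\Po_\kappa(\cx)\subseteq\Po(\cx)$, so $f\in\Tc\Po(\cx)=\cell(\cx)$ automatically and the only thing to check is $f\in(\ck_\kappa)^2$. The domain $A_0$ is $\kappa$-presentable as the domain of a $\Po_\kappa$-map, each successor object $A_{i+1}$ is a pushout of $\kappa$-presentables, and at each limit stage $j\le\lambda$ the object $A_j$ is the colimit of a chain of length $j<\kappa$; since $\kappa$ is regular this is a $\kappa$-small diagram of $\kappa$-presentable objects, so $A_j$ is $\kappa$-presentable. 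In particular $A_0$ and $A_\lambda$ are $\kappa$-presentable, whence $f\in\cell(\cx)\cap(\ck_\kappa)^2=\cell_\kappa(\cx)$.

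For the reverse inclusion let $f\colon A\to B$ lie in $\cell_\kappa(\cx)$, so both $A$ and $B$ are $\kappa$-presentable. As $A\in\ck_\kappa$, Corollary~\ref{cor4.13} presents $f$ as the composite $\delta_\perp$ of a $\kappa$-good $\kappa$-directed diagram $D\colon P\to\ck$ with links in $\Po_\kappa(\cx)$; by Remark~\ref{re4.14}(1) all objects $Dx$ are $\kappa$-presentable, $D\perp=A$ and $\colim D=B$. The strategy is to find an initial segment $Q\subseteq P$ that is still $\kappa$-good, has cardinality $<\kappa$, and satisfies $\colim_Q D\cong B$. Granting this, $D|_Q$ is a $\kappa$-good diagram of cardinality $<\kappa$ with links in $\Po_\kappa(\cx)$ (its links are among those of $D$, since predecessors in an initial segment agree with those in $P$) whose composite is again $f$, and Remark~\ref{re4.6}, together with the observation that the partial colimits occurring in its proof are $\kappa$-small colimits of $\kappa$-presentables and hence $\kappa$-presentable, identifies this composite as a member of $\Tcsh\kappa\Po_\kappa(\cx)$.

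To produce $Q$ I would use that $B=\colim D$ is $\kappa$-presentable while $D$ is $\kappa$-directed: the identity of $B$ factors through some component $\delta_{x_1}$, giving a section $s_1$ with $\delta_{x_1}s_1=\id_B$, so $B$ is a retract of $Dx_1$ through the idempotent $e_1=s_1\delta_{x_1}$. Because $\delta_{x_1}e_1=\delta_{x_1}$ and $Dx_1$ is $\kappa$-presentable, preservation of the $\kappa$-directed colimit $B$ by $\ck(Dx_1,-)$ yields $x_2\ge x_1$ with $D(x_1x_2)e_1=D(x_1x_2)$. Iterating this produces a countable chain $x_1\le x_2\le\cdots$ with sections $s_{n+1}=D(x_nx_{n+1})s_n$ of $\delta_{x_{n+1}}$ satisfying $D(x_nx_{n+1})\,s_n\delta_{x_n}=D(x_nx_{n+1})$, which is exactly the telescoping identity making $\colim_n Dx_n\cong B$. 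Taking $Q=\bigcup_n\downarrow x_n$, an increasing union of initial segments and hence an initial segment of $P$, one has $\colim_Q D=\colim_n\colim_{\downarrow x_n}D=\colim_n Dx_n\cong B$. This is the one point where the hypothesis that $\kappa$ is uncountable is indispensable: each $\downarrow x_n$ has cardinality $<\kappa$ by $\kappa$-goodness, and a countable union of sets of size $<\kappa$ stays of size $<\kappa$ precisely because $\kappa>\aleph_0$ is regular; for $\kappa=\aleph_0$ one must instead argue as in Section~\ref{s:finite_fsoa}.

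The main obstacle is exactly this last construction. A $\kappa$-presentable object written as a $\kappa$-directed colimit need not be the colimit of any $<\kappa$ sub-diagram on the nose; directedness alone only exhibits $B$ as a retract of a single $Dx_1$, and the real work is to absorb the complementary idempotent along the countable chain so that the truncated colimit becomes honestly $B$ rather than merely a retract of one object. The remaining verifications --- that $s\delta=\id$ and $\delta s=\id$ hold for the telescope $C=\colim_n Dx_n$, and that the composite $A\to\colim_Q D$ coincides with $f$ under the isomorphism $\colim_Q D\cong B$ --- are the routine checks to be filled in.
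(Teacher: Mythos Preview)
Your proof is correct and follows the same route as the paper: both present $f$ via Corollary~\ref{cor4.13} as the composite of a $\kappa$-good $\kappa$-directed diagram of $\kappa$-presentable objects, factor $\id_B$ through some $Dx_1$, and iterate the idempotent-absorption trick to obtain a countable chain with $\colim_n Dx_n\cong B$. The only difference is packaging---the paper writes $f$ as the countable composite $D\bot\to Dx_1\to Dx_2\to\cdots$ and applies Remark~\ref{re4.6} to each link, while you pass to the $\kappa$-small initial segment $Q=\bigcup_n\downarrow x_n$ and apply Remark~\ref{re4.6} once to $D|_Q$.
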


\begin{proof}
It is enough to show the inclusion $\cell_\kappa(\cx)\subseteq\Tcsh\kappa\Po_\kappa(\cx)$, the opposite one is easy. Thus, let $f\in\cell_\kappa(\cx)$.
Following \ref{cor4.13} and \ref{re4.14}(1), $f$ is the composite of a $\kappa$-good $\kappa$-directed diagram $D\colon P\to\ck$ with all objects
$Dx$ $\kappa$-presentable. Then the identity on $\colim D$ factors as $\colim D\to Dx_1\to\colim D$ where the second morphism 
$\delta_{x_1}\colon Dx_1\to\colim D$ is the colimit cocone component for some $x_1\in P$ and the composition $D\bot\to \colim D\to Dx_1$ of $f$ with the first morphism equals
$D(\bot x_1)$. The other composition $Dx_1\to\colim D\to Dx_1$ is idempotent 
and gets coequalized with the identity by $\delta_{x_1}$. Thus, there exists $x_2\geq x_1$ such that this pair gets coequalized already 
by $D(x_1x_2)\colon Dx_1\to Dx_2$. Proceeding inductively, we get a sequence $x_1\leq x_2\leq\cdots$ of objects of $P$ such that each $Dx_n$ is equipped 
with an idempotent that gets coequalized with the identity by $D(x_nx_{n+1})\colon Dx_n\to Dx_{n+1}$. Then, it is not hard to see that $\colim Dx_n\cong\colim D$
and thus the composite $D\bot\to\colim D$ is isomorphic to the transfinite composite of 
$$
D\bot\to Dx_1\to Dx_2\to\cdots 
$$
Since each morphism $Dx_n\to Dx_{n+1}$ lies in $\Tcsh\kappa\Po_\kappa(\cx)$ by Remark~\ref{re4.6} (and $\kappa$-presentability of $Dx_n$) and $\kappa$ 
is uncountable, the same applies to the composite.
\end{proof}

\begin{rem}\label{re4.16}
When all $\cx$-cofibrations are monomorphisms, the statement is true even for $\kappa=\omega$. This is because $Dx_1\to\colim_PD$ is then a monomorphism 
and consequently the idempotent on $Dx_1$ must be the identity, showing that the composite of the diagram is isomorphic already to $D\bot\to Dx_1$.

In general, the statement is not true for $\kappa=\omega$, as the following example shows.
\end{rem}

\begin{exam}\label{ex4.17}
Let $\kappa=\omega$ and consider the category of modules over the ring $R=\bz\oplus e\bz$ with $e^2=e$. Let $\cx=\{R\xrightarrow{e}R\}$. The transfinite 
composite of \[R\xrightarrow{e}R\xrightarrow{e}R\xrightarrow{e}\cdots\] is the map $R\to eR$ whose codomain is finitely presentable and annihilated by 
$(1-e)$. This cannot happen in $\Tcsh\omega\Po(\cx)$, since in any newly attached cell, there exists a non-zero element fixed by $(1-e)$.
\end{exam}

\begin{lemma}\label{le4.18}
Let $\ck$ be a cocomplete category and $\cx$ a class of morphisms in $\ck_\kappa$. Then $\cof_\kappa(\cx)=\Rt\Tcsh\kappa\Po_\kappa(\cx)$.
\end{lemma}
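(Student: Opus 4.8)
The inclusion $\Rt\Tcsh\kappa\Po_\kappa(\cx)\subseteq\cof_\kappa(\cx)$ is the routine one, which I would dispose of first. Any map in $\Tcsh\kappa\Po_\kappa(\cx)$ is a transfinite composite of length $<\kappa$ of pushouts of $\cx$, hence lies in $\cell(\cx)\subseteq\cof(\cx)$; its domain is $\kappa$-presentable and its codomain, being a $<\kappa$-indexed colimit of $\kappa$-presentable objects, is again $\kappa$-presentable. Passing to retracts preserves both memberships, since a retract of a $\kappa$-presentable object is $\kappa$-presentable and $\cof(\cx)=\Rt\Tc\Po(\cx)$ is closed under $\Rt$. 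So the content is the reverse inclusion $\cof_\kappa(\cx)\subseteq\Rt\Tcsh\kappa\Po_\kappa(\cx)$.

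For the hard direction I would take $f\colon A\to B$ in $\cof_\kappa(\cx)$. Using the reduction recorded in Notation~\ref{not2.1}, I may assume $f$ is a retract of some $g\colon A\to B'$ in the undercategory $A/\ck$, where $g\in\cell(\cx)$ (the relevant $g'$ there is a pushout of a cellular map, hence cellular); explicitly there are $i\colon B\to B'$ and $r\colon B'\to B$ with $ri=\id_B$, $g=if$ and $f=rg$. Since $A$ is $\kappa$-presentable, Theorem~\ref{th4.11} presents $g=\delta_\bot$ as the composite of a $\kappa$-good $\kappa$-directed diagram $D\colon P\to\ck$ with links in $\Po(\cx)$ and $B'=\colim D$, and by Corollary~\ref{cor4.13} together with Remark~\ref{re4.14}(1) every object $Dx$ is then $\kappa$-presentable.

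The plan is now to localize the entire retract to a single small stage of $D$. Because $B$ is $\kappa$-presentable and $\colim D$ is $\kappa$-directed, the section $i$ factors as $i=\delta_{x_1}j$ for some $j\colon B\to Dx_1$, and then $p:=r\delta_{x_1}$ satisfies $pj=ri=\id_B$, so $B$ is already a retract of $Dx_1$. The two maps $jf$ and $D(\bot x_1)$ from $A$ to $Dx_1$ become equal after $\delta_{x_1}$ (both composites equal $g$), so by $\kappa$-presentability of $A$ and $\kappa$-directedness they are coequalized by some $D(x_1x_2)$ with $x_2\ge x_1$. Setting $j':=D(x_1x_2)j$ and $p':=r\delta_{x_2}$, one checks $p'j'=\id_B$, $p'D(\bot x_2)=f$ and $j'f=D(\bot x_2)$, which is precisely the statement that $f$ is a retract of $D(\bot x_2)$ in $\ck^2$ (identity on domains, $j'$ and $p'$ on codomains). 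I expect this coequalizing step — forcing the left-hand square of the retract to commute on the nose rather than merely in the colimit — to be the only genuine subtlety.

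It then remains to identify $D(\bot x_2)$. It is the composite of the restriction of $D$ to the initial segment $\downarrow x_2$, which has top element $x_2$; this restriction is a $\kappa$-good diagram of cardinality $<\kappa$ with links in $\Po(\cx)$ and with all objects $\kappa$-presentable. By Proposition~\ref{prop4.5} and Remark~\ref{re4.6} applied with $\lambda=\kappa$, its composite lies in $\Tcsh\kappa\Po(\cx)$; moreover the intermediate objects arising in that construction are colimits of fewer than $\kappa$ of the $\kappa$-presentable $Dy$, hence themselves $\kappa$-presentable, so in fact $D(\bot x_2)\in\Tcsh\kappa\Po_\kappa(\cx)$. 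Thus $f\in\Rt\Tcsh\kappa\Po_\kappa(\cx)$, completing the inclusion. I would stress that, in contrast to Lemma~\ref{le4.15}, this argument requires no assumption on $\kappa$: allowing retracts is exactly what absorbs the failure exhibited by Example~\ref{ex4.17} in the case $\kappa=\omega$.
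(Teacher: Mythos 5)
Your proof is correct and follows essentially the same route as the paper: reduce to a retract in $A/\ck$ of a cellular map, express the latter as a $\kappa$-good $\kappa$-directed composite with $\kappa$-presentable stages, and use $\kappa$-presentability of $B$ (and $A$) to localize the retraction to a single stage $Dx$, whose structure map $D(\bot x)$ lies in $\Tcsh\kappa\Po_\kappa(\cx)$ by the rearrangement of Remark~\ref{re4.6}. You merely spell out in detail the two steps the paper compresses into ``$f$ is $\kappa$-presentable in $A/\ck$, hence a retract of some $A\to Dx$'' and the final identification of $A\to Dx$.
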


\begin{proof}
The right hand side is obviously contained in the left. For the converse, let $f\in\cof_\kappa(\cx)$ and express $f$ as a retract of some $g\in\cell(\cx)$,
\[\xymatrix{
B \ar[r] & Y \ar[r] & B \\
A \ar[r]_-\id \ar[u]_-f & A \ar[r]_-\id \ar[u]_-g & A \ar[u]_-f
}\]
(according to \ref{not2.1}(5) this retract can be taken in $A/\ck$). Express $g$ as a composite of a $\kappa$-good $\kappa$-directed diagram 
$D\colon P\to\ck$. Since $f$ is $\kappa$-presentable in $A/\ck$, it is in fact a retract of some $A\to Dx$. Following \ref{re4.14}(1), 
all objects $Dx$ are $\kappa$-presentable, which finishes the proof.
\end{proof}

The following lemma is essentially A.1.5.11 of \cite{L}. Its proof only works for locally $\kappa$-presentable categories, in contrast to our previous results. We say that a diagram $D\colon P\to\ck$ is $\kappa$-small, if $P$ has $<\kappa$ objects; its composite is then said to be a $\kappa$-small composite.

\begin{lemma}\label{le4.19}
Let $\ck$ be a locally $\kappa$-presentable category and $\cx$ a class of morphisms in $\ck_\kappa$. Then every $\kappa$-good $\kappa$-small composite 
with links in $\Po(\cx)$ lies in $\Po\cell_\kappa(\cx)$.
\end{lemma}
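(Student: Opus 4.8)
The plan is to reduce to an ordinary transfinite composite of length $<\kappa$ and then to ``pull back'' its cellular structure onto a $\kappa$-presentable base, using local $\kappa$-presentability. By Remark~\ref{re4.6} (with $\lambda=\kappa$), the composite in question is a smooth chain
$$D\perp=A_0\to A_1\to\cdots\to A_i\to\cdots\to A_\mu=\colim D$$
of some length $\mu<\kappa$, in which every link $A_i\to A_{i+1}$ is a pushout of a map $g_i\in\cx$ along a characteristic map $c_i\colon X_i\to A_i$, with $X_i$ and the codomain of $g_i$ both $\kappa$-presentable. The only defect preventing membership in $\cell_\kappa(\cx)$ is that the base $A_0=D\perp$ need not be $\kappa$-presentable; everything else is already $\kappa$-small. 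Since $\ck$ is locally $\kappa$-presentable, I would write $A_0=\colim_{t\in T}B_t$ as a $\kappa$-filtered colimit of $\kappa$-presentable objects $B_t$.

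Next I would build, over the $\kappa$-filtered index $T$, a parallel chain of $\kappa$-presentable objects realizing the same cellular attachments. Proceeding by recursion on $i\le\mu$, I would construct an increasing family $(s_i)_{i\le\mu}$ in $T$ together with objects $B_i^{(t)}$ (for $t\ge s_i$) and maps $B_i^{(t)}\to A_i$ such that $B_0^{(t)}=B_t$, each $B_i^{(t)}\to B_{i+1}^{(t)}$ is a pushout of $g_i$ along a lift $\tilde c_i^{(t)}\colon X_i\to B_i^{(t)}$ of $c_i$, the chain is smooth at limits, and $\colim_{t\ge s_i}B_i^{(t)}=A_i$ for every $i$. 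The successor step is where local presentability enters: since $X_i$ is $\kappa$-presentable and $A_i=\colim_{t\ge s_i}B_i^{(t)}$ is a $\kappa$-filtered colimit, the characteristic map $c_i$ factors through some $B_i^{(s_{i+1})}$, which provides the lift; the pushout $B_{i+1}^{(t)}=B_i^{(t)}\sqcup_{X_i}Y_i$ then again has colimit $A_{i+1}$ because pushouts commute with $\kappa$-filtered colimits. Limit stages of $i$ are handled by taking $s_i=\sup_{j<i}s_j$ and chain colimits; the supremum exists because $i\le\mu<\kappa$ and $\kappa$ is regular, so $T$ is $\kappa$-directed.

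The crucial point --- and the step I expect to be the main obstacle --- is precisely this simultaneous lifting of all $\mu$ characteristic maps through a single $\kappa$-presentable base. One must resist intersecting the cofinal sets of indices ``good for $c_i$'', since in a $\kappa$-directed poset an intersection of $<\kappa$ cofinal subsets can be empty. Threading an increasing sequence $(s_i)$ and taking the single upper bound $t_0:=s_\mu$ (available as $\mu<\kappa$ and $\kappa$ is regular) circumvents this: at $t_0$ all lifts $\tilde c_i^{(t_0)}$ are simultaneously defined, yielding a chain $B_{t_0}=B_0^{(t_0)}\to\cdots\to B_\mu^{(t_0)}$ of length $<\kappa$ whose links are pushouts of maps in $\cx$. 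As $B_{t_0}$ is $\kappa$-presentable and $\kappa$-presentable objects are closed under $<\kappa$-indexed colimits, every $B_i^{(t_0)}$ is $\kappa$-presentable, so $B_{t_0}\to B_\mu^{(t_0)}$ lies in $\Tcsh\kappa\Po_\kappa(\cx)\subseteq\cell_\kappa(\cx)$.

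Finally I would identify the original composite as a pushout of this $\kappa$-small cellular map. An induction on $i\le\mu$ shows $A_i\cong A_0\sqcup_{B_{t_0}}B_i^{(t_0)}$: the base case is trivial, the successor case follows from pushout pasting together with the fact that $c_i$ factors through $B_i^{(t_0)}\to A_i$, and the limit case follows because the cobase-change functor $A_0\sqcup_{B_{t_0}}(-)$ is a left adjoint and hence preserves the chain colimits. Taking $i=\mu$ exhibits the composite $\delta_\perp\colon D\perp=A_0\to A_\mu=\colim D$ as the pushout of $B_{t_0}\to B_\mu^{(t_0)}\in\cell_\kappa(\cx)$ along $B_{t_0}\to A_0$. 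Hence $\delta_\perp\in\Po\cell_\kappa(\cx)$, as required.
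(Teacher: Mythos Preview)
Your proof is correct and follows essentially the same strategy as the paper's: linearize the good diagram to a chain of length $<\kappa$, express $D\bot$ as a $\kappa$-filtered colimit of $\kappa$-presentable objects, thread an increasing sequence of indices along which all attaching maps factor, and exhibit the composite as a pushout of the resulting small cellular map. The paper organizes the induction slightly differently---it maintains the pushout identity $D\bot\to\colim_QD\cong A_Q\sqcup_{A_Q}B_Q$ at every stage rather than deferring it to the end, and it uses the canonical diagram $\ck_\kappa/D\bot$ (which has $\kappa$-small colimits preserved by the projection) so that limit steps are handled by taking colimits in the index category rather than mere upper bounds---but these are packaging differences, not substantive ones. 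One small terminological point: a $\kappa$-directed poset need not have suprema, so at limit $i$ you should say ``choose an upper bound $s_i$ of $\{s_j\mid j<i\}$'' rather than ``$s_i=\sup_{j<i}s_j$''; your argument only uses that $s_i\geq s_j$ for all $j<i$, so this is harmless.
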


Later, we will also need an obvious relative version: for an initial segment $Q\subseteq P$ such that $P\smallsetminus Q$ has $<\kappa$ objects, the canonical map $\colim_QD\to\colim D$ lies in $\Po\cell_\kappa(\cx)$.

Intuitively, the lemma says that the effect of attaching $<\kappa$ cells to an object takes place in some $\kappa$-presentable part. Attaching the cells 
solely to this small part results in a cellular map between $\kappa$-presentable objects with the original map being its pushout.

\begin{proof}
Let $D\colon P\to\ck$ be a $\kappa$-good $\kappa$-small diagram with links in $\Po(\cx)$. Express the bottom object of the composite $D\bot\to\colim D$ as a $\kappa$-filtered colimit $D\bot=\colim_{i\in\ci}A_i$ of a diagram $A\colon\ci\to\ck$ of $\kappa$-presentable objects such that $\ci$ has $\kappa$-small colimits and $A$ preserves them. For
instance, we can take the canonical diagram $\ci=\ck_\kappa/D\bot$ and its projection $A$ sending $X\to D\bot$ to $X$.

We will construct inductively a smooth chain $i_Q\in\ci$, indexed by initial segments $Q$ of $(P,\preceq)$ as in Proposition~\ref{prop4.5}, whose images under $A$ are denoted $A_Q=Ai_Q$, and morphisms $f_Q\colon A_Q\to B_Q$ in $\cell_\kappa(\cx)$ such that $D\bot\to\colim_QD$ is a pushout of $f_Q$ along the component $A_Q\to D\bot$ of the colimit cocone,
\[\xymatrix{
D\bot \ar[r] & \colim_QD \po \\
A_Q \ar[r]_-{f_Q} \ar[u] & B_Q \ar[u]
}\]
These data are subject to the following two conditions:
\begin{enumerate}
\item
for a successor $Q'\subseteq Q$, the morphism $f_Q$ is a composition of the pushout of $f_{Q'}$ along the obvious morphism $A_{Q'}\to A_Q$ with some element of $\Po(\cx)$;
{\addtocounter{equation}{-1}\renewcommand{\theequation}{$\star$}
\begin{equation}\label{eq:ind_step}\xymatrix{
A_{Q} \ar[r] \ar@/^15pt/[rr]^-{f_Q} & A_Q\sqcup_{A_{Q'}}B_{Q'} \ar[r] & B_Q \\
{}\POS[]+<-15pt,0pt>*+!!<0pt,\the\fontdimen22\textfont2>{A_{Q'}}="x" \ar[u] &
{}\POS[]+<-15pt,0pt>*+!!<0pt,\the\fontdimen22\textfont2>{B_{Q'}} \ar@{<-}"x"^-{f_{Q'}} \ar[u]
\POS[u]+/-\stdpbsize/="p",\xycorner{[u]+<-\stdpbsize,0pt>}{[u]+<-\stdpbsize,0pt>+/-\stdpbsize/}{"p"}
\POS[]+<15pt,0pt>*+!!<0pt,\the\fontdimen22\textfont2>{X}="y" \ar[u] &
{}\POS[]+<15pt,0pt>*+!!<0pt,\the\fontdimen22\textfont2>{Y} \ar@{<-}"y"^-{g_Q} \ar[u]
\POS[u]+/-\stdpbsize/="p",\xycorner{[u]+<-\stdpbsize,0pt>}{[u]+<-\stdpbsize,0pt>+/-\stdpbsize/}{"p"}
}\end{equation}}
\item
for a limit $Q$, the morphism $A_Q\to B_Q$ is a colimit of the pushouts of $A_{Q'}\to B_{Q'}$ over $Q'\subseteq Q$. (In particular, it is a transfinite composite of pushouts of the $g_{Q'}$ with $Q'\subsetneq Q$.)
\end{enumerate}
For $Q=P$ we obtain $D\bot\to\colim D$ as a pushout of $A_P\to B_P$ that lies in $\cell_\kappa(\cx)$ as desired.

Since the limit steps are determined by condition (2), it remains to describe the successor case. Let $Q'\subseteq Q$ be successive initial segments and assume that the only element $x$ of $Q'\smallsetminus Q$ is isolated --- otherwise, we may take $f_Q=f_{Q'}$. By induction, the partial composite $D\bot\to\colim_{Q'}D$ is a pushout of $f_{Q'}\colon A_{Q'}\to B_{Q'}$ lying in $\cell_\kappa(\cx)$. Then $\colim_{Q'}D$ is a colimit of the $\kappa$-filtered diagram of pushouts $Aj\sqcup_{A_{Q'}}B_{Q'}$, indexed by arrows $i_{Q'}\to j$. The morphism $\colim_{Q'}D\to\colim_QD$ is a pushout of $Dx^-\to Dx$ and thus a pushout of some $X\to Y$ in $\cx$. The attaching map $X\to Dx^-\to\colim_{Q'}D$ factors through some $A_j\sqcup_{A_{Q'}}B_{Q'}$. We set $i_Q=j$ and obtain $f_Q$ as in \eqref{eq:ind_step}.
\end{proof}

\begin{coro}\label {cor4.20}
Let $\ck$ be a locally $\kappa$-presentable category and $\cx$ a class of morphisms in $\ck_\kappa$. Then
\[\Tcsh\kappa\Po(\cx)=\Po\Tcsh\kappa\Po_\kappa(\cx).\]
\end{coro}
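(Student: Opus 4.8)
The plan is to prove the two inclusions separately; the inclusion $\supseteq$ is routine and the inclusion $\subseteq$ carries the content.

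For $\supseteq$ I would use that pushing out commutes with transfinite composition, together with pushout pasting. Let $f$ be a pushout of some $g\in\Tcsh\kappa\Po_\kappa(\cx)$, so that $g$ is the composite of a smooth chain $(A_i)_{i\le\lambda}$ of length $\lambda<\kappa$ with links in $\Po_\kappa(\cx)\subseteq\Po(\cx)$, and $f$ is the pushout of $g=A_0\to A_\lambda$ along some $A_0\to C_0$. Setting $C_i=C_0\sqcup_{A_0}A_i$ produces a smooth chain of the same length (smoothness at limit stages because colimits commute with colimits, so $\colim_{i<j}C_i=C_0\sqcup_{A_0}A_j=C_j$), whose link $C_i\to C_{i+1}$ is a pushout of $A_i\to A_{i+1}$ by pushout pasting, hence lies in $\Po(\cx)$. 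Its composite is $f$, so $f\in\Tcsh\kappa\Po(\cx)$.

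For $\subseteq$ the idea is to feed a short transfinite composite into Lemma~\ref{le4.19} and read a length-$<\kappa$ decomposition off its \emph{proof}. Let $f$ be the composite of a smooth chain of length $\lambda<\kappa$ with links in $\Po(\cx)$, viewed as a diagram $D\colon P\to\ck$ with $P=\lambda+1$. This $P$ is $\kappa$-good, and since $\lambda<\kappa$ with $\kappa$ a cardinal we have $|P|<\kappa$, so $D$ is $\kappa$-small. Lemma~\ref{le4.19} then applies, and its proof exhibits $f=D\bot\to\colim D$ as a pushout of a map $f_P\colon A_P\to B_P$ which is a transfinite composite of pushouts of elements of $\cx$, with exactly one pushout attached for each isolated element of $P$ (at limit elements the construction is stationary). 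The key observation is that the number of attaching steps is at most $|P|<\kappa$, so, $\kappa$ being a cardinal, the ordinal length of this composite is itself $<\kappa$; moreover $A_P$ is $\kappa$-presentable and every attached cell is $\kappa$-presentable, so, since $\ck_\kappa$ is closed under $\kappa$-small colimits, all intermediate objects are $\kappa$-presentable and each link lies in $\Po_\kappa(\cx)$. Hence $f_P\in\Tcsh\kappa\Po_\kappa(\cx)$ and $f\in\Po\Tcsh\kappa\Po_\kappa(\cx)$.

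The main obstacle is precisely this extraction. Lemma~\ref{le4.19} \emph{as stated} only gives $\Po\cell_\kappa(\cx)$, and replacing $\cell_\kappa(\cx)$ by $\Tcsh\kappa\Po_\kappa(\cx)$ cannot be done by invoking Lemma~\ref{le4.15}, which fails for $\kappa=\omega$ (Example~\ref{ex4.17}). One must therefore look inside the construction in the proof of Lemma~\ref{le4.19}, bound the ordinal length of the resulting cellular map by the cardinality of the index poset $P$ (using that any ordinal of cardinality $<\kappa$ is $<\kappa$), and check that $\kappa$-presentability is maintained along the entire composite so that the links genuinely lie in $\Po_\kappa(\cx)$ rather than merely in $\Po(\cx)$.
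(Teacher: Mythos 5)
Your proof is correct and follows the paper's own route: the paper likewise disposes of $\supseteq$ as immediate and derives $\subseteq$ from Lemma~\ref{le4.19}. Your observation that one must extract the length-$<\kappa$ transfinite structure and the $\kappa$-presentability of the intermediate objects from the \emph{proof} of that lemma (rather than from its statement, and without appealing to Lemma~\ref{le4.15}, which requires $\kappa$ uncountable) is precisely the content hidden in the paper's phrase ``easily implied by the previous lemma.''
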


\begin{proof}
The right hand side is clearly contained in the left. The opposite inclusion is easily implied by the previous lemma.
\end{proof}

\section{Applications}
An object $K$ of a category $\ck$ is called $\cx$-\textit{cofibrant} if the unique morphism $0\to K$ from an initial object belongs to $\cof(\cx)$;
$\cx$-\textit{cellular} objects are defined analogously.

\begin{coro}\label{cor5.1}
Let $\ck$ be a cocomplete category and $\cx$ a class of morphisms in $\ck_\kappa$. Then any $\cx$-cofibrant object of $\ck$ is 
a $\kappa$-filtered colimit of $\kappa$-presentable $\cx$-cofibrant objects. 
\end{coro}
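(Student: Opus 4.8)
The plan is to reduce the statement to a retract of a well-behaved $\kappa$-directed colimit and then split an idempotent. Since $K$ is $\cx$-cofibrant, the map $0\to K$ lies in $\cof(\cx)=\Rt\cell(\cx)$, and by Notation~\ref{not2.1}(5) the retract may be taken in $\ck$ itself, with identity domain components. Thus $K$ is a retract (in $\ck$) of some $L$ with $0\to L\in\cell(\cx)$. By Corollary~\ref{cor4.12} we may write $L=\colim D$ for a $\kappa$-good $\kappa$-directed diagram $D\colon P\to\ck$ with links in $\Po(\cx)$ and $D\perp=0$.

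Next I would check that every $Dx$ is $\kappa$-presentable and $\cx$-cofibrant. The initial object $0$ is $\kappa$-presentable, since $\ck(0,-)$ is naturally the constant functor at a one-point set, which preserves $\kappa$-filtered colimits because such colimits are connected; hence, by the induction on the well-founded order used in the proof of Corollary~\ref{cor4.13} (see also Remark~\ref{re4.14}(1)), all $Dx$ are $\kappa$-presentable. Moreover the restriction of $D$ to the initial segment $\downarrow x$ is again a good diagram with links in $\Po(\cx)$ and with colimit $Dx$, so Proposition~\ref{prop4.5} gives $0\to Dx\in\cell(\cx)$; in particular each $Dx$ is $\cx$-cofibrant.

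Let $\cc$ denote the class of $\kappa$-presentable $\cx$-cofibrant objects. It is closed under retracts: a retract of a $\kappa$-presentable object is $\kappa$-presentable, and a retract of an $\cx$-cofibrant object is again $\cx$-cofibrant, since a retract of a retract is a retract and therefore $\Rt\cof(\cx)=\cof(\cx)$. By the previous paragraph $L$ is a $\kappa$-directed, hence $\kappa$-filtered, colimit of objects of $\cc$, and $K$ is a retract of $L$.

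It then remains to conclude that a retract of a $\kappa$-filtered colimit of objects of $\cc$ is itself a $\kappa$-filtered colimit of objects of $\cc$. I would deduce this from the fact that $\ck$, being cocomplete, has split idempotents (the splitting of $e$ is the coequalizer of $(e,\id)$), together with the standard observation that the closure of the essentially small, retract-closed family $\cc$ under $\kappa$-filtered colimits --- equivalently the image of $\Ind_\kappa(\cc)$ in $\ck$ --- is closed under retracts. Applying this to the retract $K$ of $L$ then exhibits $K$ as a $\kappa$-filtered colimit of objects of $\cc$, as required. The main obstacle is precisely this last step: making the idempotent-splitting argument precise, i.e.\ showing that the idempotent $e=sr$ on $L=\colim_{x\in P}Dx$ (with $s,r$ the retraction data) can be resolved into a $\kappa$-filtered diagram of idempotents on the $Dx$ whose splittings are retracts lying in $\cc$ and whose colimit is $K$. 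Concretely I would index this diagram by pairs $(x,\rho)$ with $\rho$ an idempotent on $Dx$ approximating $e$ along $\delta_x$, the technical point being the cofinal existence and coherence of such $\rho$; alternatively one invokes the corresponding fact from the theory of $\kappa$-accessible categories.
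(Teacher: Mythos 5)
Your proposal is correct and follows essentially the same route as the paper: the cellular case via Corollary~\ref{cor4.12}/\ref{cor4.13} and the $\kappa$-presentability of all objects in the good diagram (Remark~\ref{re4.14}(1)), and the cofibrant case by splitting the idempotent of the retract inside the $\kappa$-filtered colimit --- the paper simply outsources this last step to \cite{MP}~2.3.11, which is exactly the ``fact from the theory of $\kappa$-accessible categories'' you invoke.
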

\begin{proof}
For cellular objects the claim follows from \ref{cor4.13} and \ref{re4.14}(1). Since any cofibrant object is a retract of a cellular one, the result follows from
\cite{MP} 2.3.11. (the proof applies in the case when $\ck$ is not locally $\kappa$-presentable but merely cocomplete). 
\end{proof}

\begin{coro}\label{cor5.2}
Let $\kappa$ be an uncountable regular cardinal, $\ck$ a locally $\kappa$-presentable category and $\cx$ a class of morphisms in $\ck_\kappa$. 
Then any $\cx$-cofibrant object of $\ck$ is a $\kappa$-good $\kappa$-directed colimit of $\kappa$-presentable $\cx$-cofibrant objects
where all links are $\cx$-cofibrations. 
\end{coro}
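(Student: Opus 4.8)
The plan is to settle the $\cx$-cellular case first, where Section~4 already provides the diagram, and then reduce the general $\cx$-cofibrant case to it by eliminating a retract. So suppose first that $K$ is $\cx$-cellular, i.e.\ $0\to K\in\cell(\cx)$. Since $\ck$ is locally $\kappa$-presentable the initial object $0$ is $\kappa$-presentable, so Corollary~\ref{cor4.13} exhibits $0\to K$ as the composite of a $\kappa$-good $\kappa$-directed diagram $D\colon P\to\ck$ with links in $\Po_\kappa(\cx)$ and $\colim D=K$. Each $Dx$ is $\kappa$-presentable by Remark~\ref{re4.14}(1), and it is $\cx$-cellular (hence $\cx$-cofibrant) because $0=D\bot\to\colim_{\downarrow x}D=Dx$ is the composite of the good subdiagram $D|_{\downarrow x}$, whose links lie in $\Po(\cx)$, so Proposition~\ref{prop4.5} applies. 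The links themselves lie in $\Po(\cx)\subseteq\cof(\cx)$ and are therefore $\cx$-cofibrations. This is already the assertion for cellular $K$.

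For the general case, an $\cx$-cofibrant $K$ satisfies $0\to K\in\cof(\cx)=\Rt\cell(\cx)$, and by Notation~\ref{not2.1}(5) the retract may be taken in the under category $0/\ck\cong\ck$. Thus $K$ is a retract, via $s\colon K\to L$ and $r\colon L\to K$ with $rs=\id_K$, of an $\cx$-cellular object $L$, which I present as $L=\colim D$ with $D\colon P\to\ck$ as in the cellular case. The remaining and main task is to transport this $\kappa$-good $\kappa$-directed presentation across the retract onto $K$; this is the retract elimination carried out in the appendix, and it is the genuinely hard part of the argument.

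Concretely, the idempotent $e=sr\colon L\to L$ displays $K$ as its splitting, and since $L$ is the $\kappa$-directed colimit of the $\kappa$-presentable objects $Dx$, this idempotent is approximated on each $\kappa$-presentable stage. I would build a $\kappa$-good $\kappa$-directed diagram $D'\colon P'\to\ck$ with $\colim D'=K$ by a recursion that splits $<\kappa$-sized portions of $e$ one step at a time, invoking Lemma~\ref{le4.10} to restore $\kappa$-directedness at limit stages; each newly attached link is then a retract of a small cellular map, i.e.\ an element of $\cof_\kappa(\cx)=\Rt\Tcsh\kappa\Po_\kappa(\cx)$ (Lemma~\ref{le4.18}), hence an $\cx$-cofibration between $\kappa$-presentable objects. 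Granting such a $D'$, the objects $D'x$ are $\kappa$-presentable by the same well-founded induction as before (codomains of links are $\kappa$-presentable at isolated stages, and limit stages are $<\kappa$-small colimits of $\kappa$-presentables), and each is $\cx$-cofibrant because $0\to D'x$ is a good composite of $\cx$-cofibrations, which form the left class of the weak factorization system $(\cof(\cx),\cx^\square)$ and are hence closed under good composites by Proposition~\ref{prop4.4}. The genuinely delicate point, keeping the diagram simultaneously good, $\kappa$-directed and assembled from $\kappa$-presentable cofibrant cells while the idempotent is split (which is where the uncountability of $\kappa$ is used), is precisely what the appendix supplies.
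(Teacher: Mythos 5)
Your proposal is correct and follows essentially the same route as the paper: the cellular case via Corollary~\ref{cor4.13} and Remark~\ref{re4.14}(1), and the general case by eliminating the retract through the appendix. The only cosmetic difference is that the paper invokes Theorem~\ref{t:retract_removal} as the packaged statement $\cof(\cx)=\cell(\cof_\kappa(\cx))$ and then simply reapplies Corollary~\ref{cor4.13} to the class $\cof_\kappa(\cx)$, rather than re-running the idempotent-splitting recursion directly on $K$ as you sketch.
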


\begin{proof}
The proof is the same as in Corollary~\ref{cor5.1} but we use Theorem~\ref{t:retract_removal} instead of \cite{MP} 2.3.11.
\end{proof}

\begin{rem}\label{re5.3}
(1) According to the proof of Corollary~\ref{cor5.2}, the links even lie in $\Po\cof_\kappa(\cx)$.

(2) Let $\ck$ be the category of modules over a ring $R$ and let $\cp$ be the class of projective $R$-modules. A monomorphism $f\colon A\to B$ is called
a $\cp$-monomorphism if its cokernel is a projective module $P$. Then $f$ is the coproduct injection $A\to A\oplus P$. We get a weak factorization 
system $(\cp$-$\Mono,\Epi)$ whose left class consists of all $\cp$-monomorphisms and the right class of all epimorphisms. The left class is cofibrantly 
generated by a morphism $i\colon 0\to R$. Cofibrant objects are precisely projective modules. Following \ref{cor5.2}, every projective module is 
a $\omega_1$-good $\omega_1$-directed colimit of $\omega_1$-presentable projective modules where all links are $\cp$-monomorphisms. Hence all
morphisms of the corresponding diagram are coproduct injections. Thus every projective module  is a coproduct of countably generated projective modules,
which is a classic theorem due to Kaplansky. 

This also shows that Corollary~\ref{cor5.2} cannot be extended to $\omega$ because there exist rings which admit projective modules which are not coproducts
of finitely generated projective modules (see \cite{GT} 7.15).
\end{rem}

Let $\ck$ be a Grothendieck category. Given a class $\cs$ of objects of $\ck$, a monomorphism $f$ is called an $\cs$-monomorphism if its cokernel 
belongs to $\cs$. An object $K$ is $\cs$-filtered if the unique morphism $0\to K$ is a transfinite composite of $\cs$-monomorphisms.  A class $\cc$ 
is \textit{deconstructible} if it is the class of $\cs$-filtered objects for a set $\cs$ (see \cite{S}). 

\begin{rem}\label{re5.4}
A class $\cc$ is deconstructible if and only if $\cc$-mo\-no\-mor\-phisms are the cellular closure of a set of morphisms. 
 
Sufficiency is easy because if $\cc$-monomorphisms are $\cell(\cx)$ for a set $\cx$ then the set $\cs$ of cokernels of morphisms from $\cx$ makes $\cc$ 
deconstructible. Necessity is \cite{SS}, Proposition 2.7.
\end{rem}
 
\begin{rem}\label{re5.5}
Let $R$ be a ring and $(\ca,\cb)$ a cotorsion pair of finite type, i.e., generated by a set $\cs$ of finitely presentable $R$-modules.
Any $A\in\cs$ is a quotient $p_A\colon A^\ast\to A$ of a free module and $\ker(p_A)$ is a morphism between finitely presentable modules. Then 
$\ca$-monomorphisms are cellularly generated by $\ker(p_A)$, $A\in\cs$ (see \cite{SS} as above). Following Corollary~\ref{cor5.1}, any module
from $\ca$ is a directed colimit of finitely presentable modules from $\ca$. This fact was proved in \cite{AHT} 2.3.
\end{rem}

An object $K$ of a model category $\ck$ is called \textit{acyclic} if $K\to 1$ is a weak equivalence.

\begin{coro}\label{cor5.6}
Let $\ck$ be a $\kappa$-combinatorial model category where $1$ is $\kappa$-presentable and any morphism $K\to 1$ splits by a cofibration. 
Then any acyclic object of $\ck$ is a $\kappa$-directed colimit of $\kappa$-presentable acyclic objects. 
\end{coro}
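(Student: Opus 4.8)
The plan is to combine the cellular description of acyclic objects with the good-colimit machinery of Corollary~\ref{cor5.2}. The key observation is that the acyclic objects should be exactly the $\cx$-cellular (or $\cx$-cofibrant) objects for a suitable set $\cx$ of morphisms between $\kappa$-presentable objects, and then I can invoke the earlier results directly.

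First I would identify the weak factorization system to apply. Since $\ck$ is a $\kappa$-combinatorial model category, the trivial cofibrations are cofibrantly generated by a set $\cj$ of morphisms between $\kappa$-presentable objects, giving the weak factorization system $(\cof(\cj),\cj^\square)=(\text{trivial cofibrations},\text{fibrations})$. The fibrant-acyclic objects are then exactly those whose map to $1$ is a trivial fibration. The hypothesis that $1$ is $\kappa$-presentable and that any $K\to 1$ \emph{splits by a cofibration} is what lets me convert ``acyclic'' into ``$\cj$-cellular/cofibrant''. I would argue as follows: if $K$ is acyclic, factor $K\to 1$ as a trivial cofibration $K\to K'$ followed by a fibration; the splitting-by-a-cofibration hypothesis should force $K$ to be a retract (in fact, using \ref{not2.1}(5), a retract in the relevant under/over category) of a $\cj$-cellular object over $1$, i.e.\ $K$ is $\cj$-cofibrant relative to the splitting. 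Conversely any $\cj$-cofibrant object mapping appropriately to $1$ is acyclic. The precise bookkeeping here — showing acyclicity is equivalent to being (relatively) $\cof(\cj)$ — is \textbf{the main obstacle}, because it requires carefully using both the splitting hypothesis and the $\kappa$-presentability of $1$ to stay within the $\kappa$-presentable world.

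Once acyclic objects are identified with $\cj$-cofibrant objects (for the set $\cj$ whose members lie in $\ck_\kappa$), the result is essentially immediate. I would apply Corollary~\ref{cor5.1}: any $\cj$-cofibrant object is a $\kappa$-filtered colimit of $\kappa$-presentable $\cj$-cofibrant objects. These $\kappa$-presentable $\cj$-cofibrant objects are themselves acyclic (being $\cj$-cofibrant, they are weakly equivalent to the appropriate initial data via trivial cofibrations, and the splitting hypothesis transfers acyclicity), so the diagram consists of $\kappa$-presentable acyclic objects. Since a $\kappa$-filtered colimit can be reindexed as a $\kappa$-directed colimit, this yields exactly the stated conclusion.

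An alternative route, which may handle the splitting more cleanly, is to work directly with Corollary~\ref{cor5.2} and Remark~\ref{re5.3}: express the acyclic object as a $\kappa$-good $\kappa$-directed colimit whose links are trivial cofibrations, so that all transition maps and all objects sit over $1$ via weak equivalences, forcing each $\kappa$-presentable object in the diagram to be acyclic. Either way, the genuine work is the translation step; everything after it is a direct application of the fat small object argument already established. I would therefore structure the proof as one lemma establishing ``acyclic $\iff$ $\cj$-cofibrant (relatively)'' followed by a one-line appeal to Corollary~\ref{cor5.1}.
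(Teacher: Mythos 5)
Your overall strategy --- reduce acyclicity to cofibrancy with respect to the generating trivial cofibrations and then quote Corollary~\ref{cor5.1} --- is exactly the paper's, but the step you yourself flag as ``the main obstacle'' is left unproven, and the sketch you give for it is off-track. In $\ck$ itself it is simply false that acyclic objects are the $\cj$-cofibrant ones: $\cj$-cofibrancy of $K$ refers to the map $0\to K$ from the \emph{initial} object, not to $K\to 1$, so no amount of bookkeeping will make that identification work. Your proposed route (factor $K\to 1$ as a trivial cofibration followed by a fibration, then argue $K$ is a retract of something $\cj$-cellular over $1$) is not needed and does not obviously close the gap.

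The paper's translation is much shorter and is worth internalizing: pass to the pointed model category $\ck_\ast=1\downarrow\ck$, whose initial object is $(1,\id)$. The splitting hypothesis gives a cofibration $f\colon 1\to K$ with $(K\to 1)\circ f=\id$; since $K$ is acyclic, 2-out-of-3 makes $f$ a weak equivalence, hence a trivial cofibration. But $f$ \emph{is} the unique map from the initial object of $\ck_\ast$ to $(K,f)$, so $(K,f)$ is trivially cofibrant in $\ck_\ast$ --- no factorizations or retracts required. Since $1$ is $\kappa$-presentable, $\ck_\ast$ is again $\kappa$-combinatorial and an object of $\ck_\ast$ is $\kappa$-presentable iff its underlying object is $\kappa$-presentable in $\ck$; Corollary~\ref{cor5.1} applied to the generating trivial cofibrations of $\ck_\ast$ then writes $(K,f)$ as a $\kappa$-directed colimit of $\kappa$-presentable trivially cofibrant objects $(K_i,f_i)$, and 2-out-of-3 applied once more to $1\to K_i\to 1$ shows each $K_i$ is acyclic. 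Note also that your ``alternative route'' via Corollary~\ref{cor5.2} would impose the extra hypothesis that $\kappa$ be uncountable, which Corollary~\ref{cor5.6} does not assume.
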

\begin{proof}
Let $\ck_\ast=1\downarrow\ck$ be the associated pointed model category (see \cite{Ho} 1.1.8). Let $K$ be an acyclic object of $\ck$.
Following our assumption, there is a cofibration $f\colon 1\to K$ and, since $K$ is acyclic, $f$ is a trivial cofibration. Thus $(K,f)$
is trivially cofibrant in $\ck_\ast$ (see \cite{Ho} 1.1.8). Since $\ck_\ast$ is $\kappa$-combinatorial as well, \ref{cor5.1} applied to trivial
cofibrations in $\ck_\ast$ yields that $(K,f)$ is a $\kappa$-directed colimit of trivially cofibrant $\kappa$-presentable objects $(K_i,f_i)$. 
Since $1$ is $\kappa$-presentable, any $K_i$ is $\kappa$-presentable in $\ck$. Since each $f_i$ is a trivial cofibration in $\ck$, each $K_i$ 
is acyclic in $\ck$.
\end{proof}

\begin{rem}\label{re5.7}
In particular, any acyclic simplicial set is a directed colimit of finitely presentable acyclic simplicial sets (see \cite{JW} 6.3).
\end{rem}

The authors are grateful to Jan \v{S}t\kern-.23em\lower-.2ex\hbox{'}ov\'{i}\v{c}ek for useful discussions.

\appendix
\section{General fat small object argument}

In this section, let $\kappa$ be an arbitrary regular cardinal and let $\cx$ be a set of morphisms in $\ck_\kappa$. We will describe in this appendix 
an alternative to the usual small object argument.

\begin{theo}
Let $f\colon A\to B$ be a morphism. Then, there exists a factorization of $f$, whose left part lies in $\GdDircard\kappa\Po(\cx)$ and whose right part 
lies in $\cx^\square$.
\end{theo}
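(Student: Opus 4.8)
The plan is to obtain the factorization in two stages: first produce a classical factorization $A\xrightarrow{c}A'\xrightarrow{p}B$ of $f$ with $c\in\cell(\cx)$ and $p\in\cx^\square$, and then feed the cellular part $c$ into Theorem~\ref{th4.11} to re-express it as the composite of a $\kappa$-good $\kappa$-directed diagram with links in $\Po(\cx)$. Since $\cx\subseteq\ck_\kappa$ has $\kappa$-presentable domains, Theorem~\ref{th4.11} applies verbatim and yields $c\in\GdDircard\kappa\Po(\cx)$; together with $p\in\cx^\square$ this is exactly the asserted factorization. The upshot is that the \emph{large} transfinite composite produced by the usual argument gets replaced by a fat, short, good directed composite, which is the whole point of the statement.

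For the first stage I would run the small object argument stopped after $\kappa$ steps. Put $A_0=A$ with structure map $A_0\to B$ equal to $f$; given $A_\gamma\to B$, let $A_{\gamma+1}$ be the pushout of $\bigsqcup_s g_s$ along $\bigsqcup_s(X_s\to A_\gamma)$, where $s$ ranges over all commutative squares with left edge $g_s=(X_s\to Y_s)\in\cx$, top edge into $A_\gamma$, and bottom edge into $B$; at limit stages take the colimit of the chain, and set $A'=A_\kappa$. Each link $A_\gamma\to A_{\gamma+1}$ is a pushout of a coproduct of maps from $\cx$, hence itself lies in $\cell(\cx)$ (attach the cells one at a time), so the whole chain, being a transfinite composite of cellular links, gives $c\colon A\to A'$ in $\Tc\Po(\cx)=\cell(\cx)$.

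The main obstacle is checking $p\colon A'\to B\in\cx^\square$, i.e.\ that stopping at $\kappa$ already solves every lifting problem; this is exactly where regularity of $\kappa$ and $\kappa$-presentability enter. Given a square with left edge $g=(X\to Y)\in\cx$, top edge $X\to A'=\colim_{\gamma<\kappa}A_\gamma$, and bottom edge $Y\to B$, regularity of $\kappa$ makes the chain $(A_\gamma)_{\gamma<\kappa}$ a $\kappa$-filtered diagram, so $\kappa$-presentability of $X$ factors $X\to A'$ through some $A_\gamma$. The resulting square with top $X\to A_\gamma$ is then among those used to form $A_{\gamma+1}$, its cell is attached there, and $Y\to A_{\gamma+1}\to A'$ is the required diagonal (both triangles commute by the universal property of the pushout); hence $p\in\cx^\square$.

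Finally, I would note that the two stages can be fused into a single recursion phrased entirely in the language of good directed diagrams, should one wish to avoid the intermediate large object altogether: one interleaves the single-cell attachment of the successor step of Theorem~\ref{th4.11} (which preserves $\kappa$-goodness and $\kappa$-directedness and adds one link in $\Po(\cx)$) with the goodification of Lemma~\ref{le4.10} at limit stages, enqueuing each freshly appearing lifting problem and running for $\kappa$ rounds, so that by the same regularity and $\kappa$-presentability bookkeeping every lifting problem against the final colimit is eventually solved.
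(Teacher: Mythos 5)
Your proof is correct, but it is organized differently from the paper's. The paper does not pass through an intermediate classical factorization at all: it builds the $\kappa$-good $\kappa$-directed diagram directly by a transfinite recursion of length $\kappa$, at each stage attaching a whole batch of cells (one for each lifting square, with each attaching map factored through some object $D_\beta$ of the current diagram) and then immediately restoring $\kappa$-directedness via the ${}^*$-construction of Lemma~\ref{le4.10}; the verification that the right-hand factor lies in $\cx^\square$ is the same regularity/$\kappa$-presentability argument you give. Your two-stage route --- classical small object argument stopped at $\kappa$, followed by Theorem~\ref{th4.11} applied to the cellular part --- is exactly the alternative that the paper's own remark after the theorem alludes to, and it is legitimate: your reduction of each pushout-of-a-coproduct link to a transfinite composite of single-cell pushouts correctly lands the left factor in $\cell(\cx)=\Tc\Po(\cx)$, after which Theorem~\ref{th4.11} applies verbatim since $\cx\subseteq\ck_\kappa$. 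What your version buys is modularity (Theorem~\ref{th4.11} is used as a black box) at the cost of a detour through the large thin transfinite composite, which is precisely the object the fat small object argument is designed to avoid; the paper's version buys a self-contained construction that never forms the long composite, at the cost of redoing the goodification bookkeeping inline. Your closing paragraph about fusing the two stages is, in effect, a sketch of the paper's actual proof.
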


\begin{proof}
The factorization is obtained as a transfinite iteration of length $\kappa$ of the construction, that (similarly to the usual small object argument) 
adds cells that solve all the possible lifting problems. Thus, we consider by induction, for $\alpha<\kappa$, a factorization \[A\to\colim D_\alpha\to B\]
where $D_\alpha\colon P_\alpha\to K$ is a $\kappa$-good $\kappa$-directed diagram with links in $\Po(\cx)$. We assume, that for each $\beta<\alpha$, 
$P_\beta$ is an initial segment in $P_\alpha$ and that $D_\beta$ is the restriction of $D_\alpha$. Consider the set of all squares
\[\xymatrix{
\colim D_\alpha \ar[r]^-f & B \\
X \ar[r]_-g \ar[u]^-x & Y \ar[u]_-y
}\]
parametrized by $x,y$, and for each such square, choose a factorization of $x$ through some $X\to D_\beta$. Then form the pushout square
\[\xymatrix{
D_\beta \ar[r] & D_{x,y} \po \\
X \ar[r] \ar[u] & Y \ar[u]
}\]
Next, add to $P_\alpha$, for each $x,y$, objects $p_{x,y}$ with $p_{x,y}>\beta$ and extend the diagram $D_\alpha$ to $p_{x,y}$ by $D_{x,y}$. Finally, 
to obtain $D_{\alpha+1}\colon P_{\alpha+1}\to\ck$, perform the ${}^*$-construction of Lemma~\ref{le4.10}. In the limit steps, take $P_\alpha$ to be 
the ${}^*$-construction of the union $\bigcup_{\beta<\alpha}P_\beta$.

Thus, in each step $\alpha\leq\kappa$, we obtain a $\kappa$-good $\kappa$-directed diagram $D_\alpha\colon P_\alpha\to\ck$ with links in $\Po(\cx)$. 
The factorization is \[A\to\colim D_\kappa\to B.\] It is easy to see that $\colim D_\kappa\to B$ lies in $\cx^\square$ --- since any 
$X\to\colim D_\kappa$ factors through some $\colim D_\alpha$ with $\alpha<\kappa$, the lifting problem is solved in $\colim D_{\alpha+1}$.
\end{proof}

\begin{rem}
There is a slight difference between the proof of this theorem and a direct application of Theorem~\ref{th4.11} to the usual small object argument: 
here we attach a number of cells at once and only then apply the ${}^*$-construction. We could have developed the rearrangement in Theorem~\ref{th4.11} 
for transfinite composites of pushouts of \emph{coproducts} of morphisms in $\cx$ which would have given this exact version of the small object argument.
\end{rem}

\section{Elimination of retracts}
Retracts of $A\to B$ in this section are to be understood as retracts in the category $A/\ck$. These are enough to produce all cofibrations 
as retracts of cellular morphisms, as explained in \ref{not2.1}(5). Moreover we assume that $\ck$ is locally $\kappa$-presentable and that $\cx$ 
is a set of morphisms in $\ck_\kappa$ for some \emph{uncountable} cardinal $\kappa$.

The following theorem shows, that the use of retracts is unnecessary, at least if one is willing to enlarge the generating set $\cx$. This result 
was proved by Lurie in \cite{L}, A.1.5.12. Here we present an alternative proof, that relies on $\kappa$-good $\kappa$-directed diagrams.

\begin{theo}\label{t:retract_removal}
$\cof(\cx)=\cell(\cof_\kappa(\cx))$.
\end{theo}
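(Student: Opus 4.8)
The plan is to prove the two inclusions separately; the inclusion $\cell(\cof_\kappa(\cx))\subseteq\cof(\cx)$ is a formality, while $\cof(\cx)\subseteq\cell(\cof_\kappa(\cx))$ carries all the content and is where retracts get absorbed into the enlarged generating class $\cof_\kappa(\cx)$. For the easy inclusion I would simply note that $\cx\subseteq\cof_\kappa(\cx)\subseteq\cof(\cx)$, and that $\cof(\cx)={}^\square(\cx^\square)$ is the left-hand class of the weak factorization system of Notation~\ref{not2.1}, hence closed under pushouts and transfinite composites; therefore $\cell(\cof_\kappa(\cx))=\Tc\Po(\cof_\kappa(\cx))\subseteq\cof(\cx)$.

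For the hard inclusion, let $f\colon A\to B$ lie in $\cof(\cx)$. Using \ref{not2.1}(5) I would realize $f$ as a retract in $A/\ck$ of a cellular map $g\colon A\to Y$, so that there are $s\colon B\to Y$ and $r\colon Y\to B$ with $rs=\id_B$, $g=sf$ and $f=rg$; write $e=sr$ for the resulting idempotent on $Y$, whose splitting is exactly $B$. By Corollary~\ref{cor4.12}, $g$ is the composite of a $\kappa$-good $\kappa$-directed diagram $D\colon P\to\ck$ with $D\perp=A$, $\colim D=Y$ and links in $\Po(\cx)$. Since $D$ is $\kappa$-good, each initial segment $\downarrow x$ has fewer than $\kappa$ elements, so the relative form of Lemma~\ref{le4.19} applied to $D|_{\downarrow x}$ shows that every structural map $A\to Dx$ is a pushout of a map in $\cell_\kappa(\cx)$; concretely $Dx=A\sqcup_{\bar A_x}\bar B_x$ with $\bar A_x,\bar B_x\in\ck_\kappa$. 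Thus, in $A/\ck$, the map $g$ is a $\kappa$-directed colimit of maps $A\to Dx$ each of which is a pushout of a $\kappa$-presentable cellular map, and $f$ is a retract of this colimit.

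The heart of the argument is then to split the idempotent $e$ \emph{inside} this $\kappa$-directed diagram and to reorganize the outcome into a good $\kappa$-directed diagram $E\colon P'\to\ck$ with $E\perp=A$, $\colim E=B$, composite equal to $f$, and links in $\Po(\cof_\kappa(\cx))$. Here I would use that $e$ fixes the image of $g$, as $eg=srsf=sf=g$, and that each $\bar B_x$ is $\kappa$-presentable, so the restriction of $e\delta_x$ to $\bar B_x$ factors through some $\delta_{x'}$ with $x'\geq x$; $\kappa$-directedness of $P$ then lets me make these factorizations mutually compatible and idempotent on a cofinal system, so that $e$ is realized by genuine morphisms of the diagram. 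Splitting these idempotents at the $\kappa$-presentable cell level produces, for each relevant index, a factor $A\to B_j$ that is a pushout of a retract of the small cellular map $\bar A_x\to\bar B_x$. Since $\kappa$ is uncountable, Lemma~\ref{le4.15} gives $\cell_\kappa(\cx)=\Tcsh\kappa\Po_\kappa(\cx)$, and hence by Lemma~\ref{le4.18} a retract of such a map lies in $\cof_\kappa(\cx)$; so each $A\to B_j$ is a pushout of an element of $\cof_\kappa(\cx)$. Assembling the $B_j$ along the well-founded $\kappa$-directed poset inherited from $P$ yields the desired good diagram $E$.

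Finally, applying Proposition~\ref{prop4.5} with generating class $\cof_\kappa(\cx)$ to the good diagram $E$ gives $f\in\cell(\cof_\kappa(\cx))$, completing the nontrivial inclusion. I expect the main obstacle to be precisely the middle step: realizing the idempotent $e$ by honest morphisms of the $\kappa$-directed diagram and splitting it compatibly, while keeping the bookkeeping organized over a well-founded poset so that $E$ is genuinely good and $\kappa$-directed with links of the correct small form. The underlying principle — that retracts interact well with $\kappa$-filtered colimits, and that a retract of a $\kappa$-presentable object is $\kappa$-presentable — is standard, but here it must be executed relative to the fixed bottom object $A$ and packaged as a good diagram rather than a bare filtered colimit.
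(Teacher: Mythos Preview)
Your overall plan matches the paper's: the easy inclusion is exactly as you say, and for the hard one the paper also starts from a cellular $g\colon A\to Y$ with idempotent $e$ in $A/\ck$, rewrites $g$ via Corollary~\ref{cor4.12} as a $\kappa$-good $\kappa$-directed composite, and then tries to split $e$ ``along the diagram''. You also correctly identify Lemma~\ref{le4.19} and the passage from retracts of small cellular maps to $\cof_\kappa(\cx)$ as the ingredients controlling the links.

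The genuine gap is in your ``heart'' paragraph. You propose to lift $e$ to compatible idempotents on a cofinal system of $Dx$'s and then split them to obtain a good $\kappa$-directed diagram $E$ indexed by (a subposet of) $P$. The paper explicitly warns that this does not work: after Lemma~\ref{l:lifting_idempotents} it remarks that the objects carrying lifted idempotents ``are cofinal in $P$'', but ``if these constituted a good diagram, we could have used them\ldots\ This is however not true in general.'' The problem is twofold. First, a single factorization $e\delta_x$ through $\delta_{x'}$ does not give an idempotent on any $Dx'$; one needs an $\omega$-chain $x\leq S_1x\leq S_2x\leq\cdots$ and takes the induced endomorphism on the colimit $DSx$ (this is where $\kappa$ uncountable enters, and it is packaged as Lemma~\ref{l:lifting_idempotents}, producing an endofunctor $S$ and an idempotent natural transformation $\varphi\colon DS\Rightarrow DS$). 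Second, even once you have such $\varphi$, the images of the idempotents do \emph{not} assemble into a good diagram over $P$: the links of $P$ sit at isolated elements $x$, whereas the idempotents live at $Sx$, and there is no reason the resulting maps between images are controlled by $\Po(\cof_\kappa(\cx))$.

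The paper's fix is to abandon the idea of a good diagram over $P$ and instead build a \emph{transfinite chain} of closed initial segments $P_0\subseteq P_1\subseteq\cdots$ of $P$, together with compatible idempotents $f_i$ on $\colim_{P_i}D$. At each successor step one applies the \emph{relative} form of Lemma~\ref{l:lifting_idempotents} on $P_i$ and extends to $P$, then enlarges $P_i$ by a single $\kappa$-small initial segment $Q$ stable under $S$; the induced map between the images $E_i\to E_{i+1}$ is a retract of a pushout of $\colim_{P_i\cap Q}D\to\colim_QD$, which lies in $\Po\cell_\kappa(\cx)$ by the relative Lemma~\ref{le4.19}. A separate short lemma (Lemma~\ref{l:short_retracts}, i.e.\ $\Rt\Po\cell_\kappa(\cx)\subseteq\Po\cof_\kappa(\cx)$) then places each link in $\Po\cof_\kappa(\cx)$. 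So the output is a transfinite composite, not a $\kappa$-directed good diagram as you envisage, and the bookkeeping runs over initial segments of $P$ rather than over $P$ itself. Your sketch needs to be reorganized along these lines; as written, the step ``assembling the $B_j$ along the well-founded $\kappa$-directed poset inherited from $P$'' does not go through.
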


We start with a couple of generalities.

Let $P$ be a $\kappa$-good $\kappa$-directed poset. We say, that an upper bound $x$ of an initial segment $Q\subseteq P$ is a \emph{strong upper bound}, 
if all the elements in $(\downarrow x)\smallsetminus Q$ are limit. Equivalently\footnote{If $x$ is a strong upper bound of $Q$ then the inclusion 
$Q\subseteq(\downarrow x)$ induces an isomorphism $\colim_QD\to\colim_{\downarrow x}D\cong Dx$ by Lemma~\ref{le4.9}. In the opposite direction, 
we observe first that a representable functor $D=P(y,-)\colon P\to\Set$ is smooth if and only if $y$ is isolated. Thus, if an isolated 
$y\in(\downarrow x)\smallsetminus Q$ existed, we would then get $D|_Q=\emptyset$ and $Dx=1$, a contradiction.}, the canonical map $\colim_QD\to Dx$ 
is an isomorphism for all smooth diagrams $D\colon P\to\ck$. We define
\[\overline Q=\{x\in P\mid \text{$x$ is a strong upper bound of some subset $R\subseteq Q$}\}.\]
By its definition, the closure $\overline Q$ is an initial segment and, according to Lemma~\ref{le4.9}, the canonical map 
$\colim_{Q}D\to\colim_{\overline Q}D$ is an isomorphism. We say that an initial segment $Q$ is \emph{closed}, if $\overline Q=Q$.

It is obvious from its construction in Lemma~\ref{le4.10} that $P^*$ has strong upper bounds of all $\kappa$-small initial segments.

\begin{lemma}\label{l:lifting_idempotents}
Let $P$ be a $\kappa$-good poset with strong upper bounds of all $\kappa$-small initial segments. Let $D\colon P\to\ck$ be a smooth diagram whose 
all objects are $\kappa$-presentable and let there be given an idempotent $f\colon\colim D\to\colim D$ in $(D\bot)/\ck$.

Then there exists an endofunctor $S\colon P\to P$ with $S\bot=\bot$ and $x\leq Sx$, and an idempotent natural transformation $\varphi\colon DS\to DS$ 
with $\varphi_\bot=\id$, such that the following diagram commutes.
\[\xymatrix{
\colim D \ar[r]^-f \ar[d]_-\cong & \colim D \ar[d]^-\cong\\
\colim DS \ar[r]_-{\varphi_*} & \colim DS
}\]
(The vertical maps are induced by $D(x,Sx)\colon Dx\to DSx$.)

Moreover, if $Q\subseteq P$ is a closed initial segment and the idempotent $f$ extends an idempotent $f'\colon\colim_QD\to\colim_QD$, and if there 
are given the $S'\colon Q\to Q$ and $\varphi'\colon DS'\to DS'$ as above for $f'$, then the $S$ and $\varphi$ may be constructed as extensions 
of $S'$ and $\varphi'$.
\end{lemma}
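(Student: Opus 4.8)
The plan is to build $S$ and $\varphi$ by recursion along the well-founded order of $P$, maintaining at each $x\in P$ the \emph{realization invariant}
\[f\delta_x=\delta_{Sx}\circ\varphi_x\circ D(x,Sx),\]
where $\delta_y\colon Dy\to\colim D$ is the colimit cocone. A short chase shows this invariant already forces the square of the lemma to commute: writing $\delta^S$ for the cocone of $DS$ and $\sigma\colon\colim D\to\colim DS$ for the induced isomorphism (which exists because $x\le Sx$ makes $S$ cofinal), the cocone identity $\delta^S_x=\delta^S_{Sx}\circ D(Sx,S(Sx))$ (instance of $x\le Sx$) turns the invariant into $\sigma f\delta_x=\varphi_*\sigma\delta_x$ for all $x$. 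Hence it suffices to construct $S$ monotone with $x\le Sx$ and $S\bot=\bot$, together with an idempotent natural $\varphi$ with $\varphi_\bot=\id$, obeying the invariant. For the base step I set $S\bot=\bot$, $\varphi_\bot=\id$; the invariant at $\bot$ reads $f\delta_\bot=\delta_\bot$, which holds precisely because $f$ is a map in $(D\bot)/\ck$.

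For a limit $x$, all $Sy,\varphi_y$ with $y<x$ are available. Since $\downdownarrows x$ has $<\kappa$ elements and $\kappa$ is regular, the set $\downarrow x\cup\bigcup_{y<x}\downarrow Sy$ is a $\kappa$-small initial segment, and I choose $Sx$ to be a strong upper bound of it; this yields both $x\le Sx$ and $Sy\le Sx$. By Lemma~\ref{le4.9} together with smoothness of $D$ at $x$ (which lets the contribution of $Dx$ be absorbed into that of the $DSy$), the canonical comparison is an isomorphism $DSx\cong\colim_{y<x}DSy$. I then let $\varphi_x$ be the map induced by the compatible family $(\varphi_y)_{y<x}$: it is automatically natural over all $y<x$, it is idempotent because the induced map of a family of idempotents is idempotent, and the invariant at $x$ is inherited from the invariants at the $y<x$.

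The isolated step is the crux. Let $x^-$ be the predecessor. Using that $Dx$ is $\kappa$-presentable and $\colim D$ is $\kappa$-directed, I factor $f\delta_x$ through some component lying above $Sx^-$, and iterate to produce a chain $z_0<z_1<\cdots$ of length $\omega$ (hence $\kappa$-small, as $\kappa$ is uncountable) with maps $\psi_n\colon Dz_n\to Dz_{n+1}$ enforcing, one equation at a time by $\kappa$-directedness and $\kappa$-presentability, the conditions (i) $\delta_{z_{n+1}}\psi_n=f\delta_{z_n}$, (ii) the cocone compatibility $\psi_{n+1}D(z_n,z_{n+1})=D(z_{n+1},z_{n+2})\psi_n$, (iii) the telescoping relation $\psi_{n+1}\psi_n=D(z_{n+1},z_{n+2})\psi_n$, and (iv) agreement with $\varphi_{x^-}$ on the image of $DSx^-$. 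Choosing $Sx$ a strong upper bound of $\{z_n\}$ gives $DSx\cong\colim_n Dz_n$; by (ii) the $\psi_n$ assemble into an endomorphism $\varphi_x\colon DSx\to DSx$, which is \emph{strictly} idempotent by (iii), natural over the link by (iv), and realizes $f$ by (i). Starting the chain above $Sx^-$ secures $Sx^-\le Sx$, and since every $w<x$ satisfies $w\le x^-$ this gives $Sw\le Sx$, i.e.\ monotonicity. Naturality over arbitrary comparabilities then follows from naturality over links and over the limit cocones, as usual for good diagrams.

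For the relative statement I run exactly the same recursion on $P$, but perform the construction only for $x\in P\smallsetminus Q$, keeping $S|_Q=S'$ and $\varphi|_Q=\varphi'$. Closedness of $Q$ guarantees that the strong upper bounds and colimits invoked at elements of $Q$ already live inside $Q$, so the given data $S',\varphi'$ remain consistent with the recipe above, while the hypothesis that $f$ extends $f'$ supplies the realization invariant on $Q$ for free. I expect the isolated step to be the main obstacle: one must produce an honestly idempotent endomorphism of the \emph{single} $\kappa$-presentable object $DSx$ that is at once natural over the link and realizes $f$. The telescope resolves this—strictness of idempotency and of the naturality/realization equations is purchased by moving finitely far up a countable tower, which stays $\kappa$-small because $\kappa$ is uncountable regular, and the strong-upper-bound hypothesis is exactly what converts the tower's colimit into a genuine object $DSx$ with $Sx\in P$.
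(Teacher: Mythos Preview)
Your recursion splits into limit and isolated cases, and at a limit $x$ you set $Sx$ to be a strong upper bound of the initial segment $R$ generated by $\{x\}\cup\{Sy:y<x\}$ and then assert that the comparison $\colim_{y<x}DSy\to DSx$ is an isomorphism. This is the gap. Lemma~\ref{le4.9} together with smoothness at $x$ gives only $DSx\cong\colim_RD$; the further identification with $\colim_{y<x}DSy$ amounts to cofinality of $S\colon\downdownarrows x\to R$, i.e.\ connectedness of each slice $\{y<x:Sy\geq r\}$. But in a $\kappa$-good poset the set $\downdownarrows x$ need not be directed --- the ${}^*$-construction of Lemma~\ref{le4.10} manufactures limit elements $p_S$ with $\downdownarrows p_S=S$ an arbitrary $\kappa$-small initial segment --- and as soon as two incomparable $y_1,y_2<x$ satisfy $Sy_1,Sy_2\geq r$ for some $r$ with $S\bot=\bot\not\geq r$, that slice is disconnected. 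Concretely, with $\downdownarrows x=\{\bot,a,b\}$ and $a,b\leq Sa,Sb$, one gets $\colim_{y<x}DSy\cong DSa\sqcup_{D\bot}DSb$ while $DSx\cong DSa\sqcup_{Da\sqcup_{D\bot}Db}DSb$; the comparison is then a proper quotient, and your ``$\varphi_x$ induced by the compatible family $(\varphi_y)_{y<x}$'' is not well-defined on $DSx$, since the endomorphism of $\colim_{y<x}DSy$ has no reason to descend.

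The paper avoids this by running the telescope \emph{globally} rather than locally at each isolated $x$. It builds, by induction on $n$, functors $S_n\colon P\to P$ and natural transformations $(\varphi_n)\colon DS_n\Rightarrow DS_{n+1}$ (each of these by a single well-founded recursion over $x\in P$, treating limit and isolated elements uniformly), and only afterwards sets $Sx$ to be a strong upper bound of the $\omega$-chain $(S_nx)_n$ together with the $Sy$ for $y<x$, and $\varphi_x=\colim_n(\varphi_n)_x$. The inner recursion constructing each $(\varphi_n)_x$ does use $\colim_{y<x}DS_ny$, but only as a $\kappa$-presentable source of compatibility constraints in the factorization that produces $S_{n+1}x$ --- never as the value $DS_{n+1}x$ itself, and no isomorphism is claimed. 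Your isolated-step telescope is essentially the right mechanism; the point is that it must be synchronized across all of $P$ one level $n$ at a time, so that naturality is secured by the $(\varphi_n)$ themselves rather than by a colimit description of $DSx$ at limit $x$ that is unavailable.
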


This lemma roughly says that there are many objects in the diagram with idempotents on them (they are cofinal in $P$). If these constituted a good diagram, we could have used them to express the image of the idempotent $f$ as a good colimit of retracts. This is however not true in general and that is why we need the relative version.

\begin{proof}
The basic idea is rather simple. We construct $Sx$ as a strong upper bound of a chain $S_1x\leq S_2x\leq\cdots$ and $\varphi_x$ as a colimit of morphisms $(\varphi_n)_x$ in the diagram
\[\xymatrix{
DS_1x \ar[r] \ar[dr]^(.6){(\varphi_1)_x} & DS_2x \ar[r] \ar[dr]^(.6){(\varphi_2)_x} & \cdots \ar[r] & DSx \ar[d]^-{\varphi_x} \ar[r] & \colim_PD \ar[d]^-f \\
DS_1x \ar[r] & DS_2x \ar[r] & \cdots \ar[r] & DSx \ar[r] & \colim_PD
}\]
where the unnamed arrows are induced by $D$ from the unique arrows in $P$. We will stick to this convention in the rest of the proof.

The $S_nx$ and $(\varphi_n)_x$ are constructed inductively. Without the requirement of naturality, they are obtained by factoring
\[DS_nx\to\colim D\xrightarrow{f}\colim D\]
as $DS_nx\xrightarrow{(\varphi_n)_x}DS_{n+1}x\to\colim D$. By choosing $S_{n+1}x$ big enough, we may assume that $S_{n+1}x\geq S_nx$ and that the following compositions are equal
{\addtocounter{equation}{-1}\renewcommand{\theequation}{$\Diamond$}\begin{equation}\label{e:idempotency_phi}
\xy *!C\xybox{
\xymatrix@C=30pt{
& DS_nx \ar[rd] \\
DS_{n-1}x \ar[r]^-{(\varphi_{n-1})_x} \ar[ru]^-{(\varphi_{n-1})_x} \ar[rd] & DS_nx \ar[r]^-{(\varphi_n)_x} & DS_{n+1}x \\
& DS_nx \ar[ru]_-{(\varphi_n)_x}
}}\endxy\end{equation}}
ensuring that $\varphi_x$ will be idempotent.

To ensure naturality, we have to construct $(\varphi_n)_x$ inductively with respect to $x$. We set $S_{n+1}\bot=\bot$ and $(\varphi_n)_\bot=\id$. Assume, that $(\varphi_n)_y$ has been defined for all $y<x$. We thus have a diagram
\[\xymatrix{
\colim D \ar[rr]^-f & & \colim D\\
DS_nx \ar[u] \ar@{-->}[r]^-{(\varphi_n)_x} & DS_{n+1}x \ar@{-->}[ru] \\
\colim_{y<x}DS_ny \ar[rr]_-g \ar[u] & & Dz \ar[uu] \ar@{-->}[lu]
}\]
where $z$ is an arbitrary upper bound of the set $\{S_{n+1}y\mid y<x\}$ and the bottom map $g$ is given on $DS_ny$ as the composition
\[DS_ny\xrightarrow{(\varphi_n)_y}DS_{n+1}y\to Dz.\]
The solid square commutes and it is easy to find some $S_{n+1}x$ and a factorization $(\varphi_n)_x$ using the $\kappa$-presentability of $DS_nx$ 
and $\colim_{y<x}DS_ny$. In this way both $S_{n+1}$ will be a functor and $\varphi_n$ a natural transformation. We may still achieve the commutativity of \eqref{e:idempotency_phi} by passing to a bigger $S_{n+1}x$.

Similarly, the strong upper bound $Sx$ of the chain $S_1x\leq S_2x\leq\cdots$ is chosen inductively, starting with $S\bot=\bot$. When all the $Sy$ have been chosen for $y<x$, $Sx$ is chosen as a strong upper bound for the initial segment spanned by $S_1x,S_2x,\ldots$ and all the $Sy$ with $y<x$. At the same time $Sx$ is a strong upper bound of the initial segment spanned by the $S_1x,S_2x,\ldots$ since all the $Sy$, $y<x$, lie in the closure of this initial segment, so that Lemma~\ref{le4.9} applies.

When $S'$ and $\varphi'$ are given, we may choose $S_{n+1}x=S'x$, $(\varphi_n)_x=\varphi'_x$, and $Sx=S'x$ in the above, whenever $x\in Q$.
\end{proof}

\begin{proof}[Proof of Theorem~\ref{t:retract_removal}]
Suppose that $A\to B$ is a cellular map and that a retract of it is given by an idempotent $f\colon B\to B$ in $A/\ck$. We write $A\to B$ as a colimit of a $\kappa$-good $\kappa$-directed diagram $D\colon P\to A/\ck$ with links in $\Po(\cx)$. By our assumptions, it consists of $\kappa$-presentable objects of $A/\ck$ and, applying the ${}^*$-construction of Lemma~\ref{le4.10} if necessary, we may construct $D$ in such a way that strong upper bounds of all $\kappa$-small initial segments exist. Thus, Lemma~\ref{l:lifting_idempotents} is applicable to $D$.

We may construct the colimit $\colim D$ inductively similarly to the proof of Proposition~\ref{prop4.5}. It will be a transfinite composite of partial colimits $\colim_{P_i}D$ equipped with compatible idempotents
\[f_i\colon\colim_{P_i}D\to\colim_{P_i}D,\]
where the $P_i$ form a transfinite sequence of closed initial segments with respect to the inclusion.

We start with $P_0=\{\bot\}$ and $f_0=\id_{D\bot}$.

Suppose, that we have constructed $P_i$ and $f_i$. Then, we construct a new endofunctor $S$ and a natural transformation $\varphi$ on $P$ by Lemma~\ref{l:lifting_idempotents} by first constructing them on $P_i$ and then extending to $P$. Next, take any minimal element $x$ not in $P_i$ and denote by $Q$ the initial segment generated by the sequence $Sx,S^2x,\ldots$; it is $\kappa$-small. Then the colimit of $D$ over $P_i\cup Q$ can be written as the pushout
\[\xymatrix{
\colim_{P_i}D \ar[r] & \leftbox{\colim_{P_i\cup Q}D}{{}\xrightarrow\cong\colim_{P_{i+1}}D} \po \\
\colim_{P_i\cap Q}D \ar[r] \ar[u] & \colim_QD \ar[u]
}\]
Finally we take $P_{i+1}=\overline{P_i\cup Q}$. This does not change the colimit. The functor $S$ preserves both $P_i$ and $Q$ by construction and thus also $P_i\cap Q$ and $P_i\cup Q$. Therefore, we have idempotents on all colimits in the above square and they are compatible; we denote that on $\colim_{P_{i+1}}D$ by $f_{i+1}$.

Finally, we have to explain how to compute the retract of the composite $D\bot\to\colim_PD$. We have split this composite into a transfinite composite in the category of objects with idempotents. For each $i$, let $E_i$ denote the image of the idempotent on $\colim_{P_i}D$ with the retraction $r_i$. Then consider the following pushout (which simply defines $F_i$)
\[\xymatrix{
E_i \ar[r] & F_i \po \\
\colim_{P_i}D \ar[r] \ar[u]^-{r_i} & \colim_{P_{i+1}}D \ar[u]
}\]
There is an induced idempotent on $F_i$, which restricts to $\id$ on $E_i$ and whose image is exactly $E_{i+1}$. At the same time, $E_i\to F_i$ is a pushout of the map $\colim_{P_i\cap Q}D\to\colim_QD$ which, by $\kappa$-smallness of $Q$ and a relative version of Lemma~\ref{le4.19}, is a pushout of an element of $\cell_\kappa(\cx)$. By Lemma~\ref{l:short_retracts} below, $E_i\to E_{i+1}$ lies in $\Po\cof_\kappa(\cx)$. Thus, the composite $A\to\colim_iE_i$, i.e.\ the image of the idempotent $f$ that we started with, is $\cof_\kappa(\cx)$-cellular.
\end{proof}

The following lemma is A.1.5.10 of \cite{L}.

\begin{lemma}\label{l:short_retracts}
$\Rt\Po\cell_\kappa(\cx)\subseteq\Po\cof_\kappa(\cx)$.
\end{lemma}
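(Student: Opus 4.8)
The plan is to strip an arbitrary $\ck^2$-retract down to a single idempotent on a pushout of a cellular map, and then push all of the data down to a $\kappa$-presentable stage. First, by \ref{not2.1}(5) together with the fact that a pushout of a cellular map is again cellular (so $\Po\cell_\kappa(\cx)$ is closed under further pushout, pushouts composing), I would replace a retract in $\ck^2$ by a retract in $A/\ck$: it suffices to treat a split idempotent $e=\iota\pi$ (with $\pi\iota=\id$) on some $g'\colon A\to B$ in $\Po\cell_\kappa(\cx)$ satisfying $eg'=g'$, whose image is the map $f\colon A\to E=\mathrm{im}(e)$, $f=\pi g'$. Writing $g'$ as the pushout of $g\colon U\to V$ in $\cell_\kappa(\cx)$ along $u\colon U\to A$, the relation $eg'=g'$ shows that $e$ is determined by $ev\colon V\to B$ and that $E$ is the coequalizer of $v,ev\colon V\rightrightarrows B$; equivalently, $E=B\sqcup_{V\sqcup V}V$ is the pushout of the fold $\nabla\colon V\sqcup V\to V$ along $(v,ev)$.

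Second, I would descend to a small stage using local $\kappa$-presentability. Write $A=\colim_i A_i$ as a $\kappa$-filtered colimit of $\kappa$-presentables; after factoring $u$ through some $A_{i_0}$, the pushouts $B_i=A_i\sqcup_U V$ are $\kappa$-presentable, $B=\colim_i B_i$ (cocone maps $\beta_i$), and $g'=\colim_i g_i$ with each $g_i\colon A_i\to B_i$ in $\cell_\kappa(\cx)$. Since $V$ is $\kappa$-presentable, $ev$ factors as $\beta_k w$ for some $w\colon V\to B_k$; enlarging $k$ I arrange $wg=v_kg$, so that $w$ together with $g_k$ determines an endomorphism $e_k$ of $B_k$ over $A_k$ (by $e_kg_k=g_k$, $e_kv_k=w$) with $\beta_ke_k=e\beta_k$.

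The main obstacle is that this $e_k$ need \emph{not} be idempotent at a finite stage, and I would resolve it exactly as idempotents are transported through $\kappa$-filtered colimits: from $\beta_k(e_kw)=e(ev)=ev=\beta_kw$ and $\kappa$-presentability of $V$ there is $k'\geq k$ with $B(k,k')(e_kw)=B(k,k')w$; checking $e_{k'}B(k,k')=B(k,k')e_k$ on the two pushout summands $A_k$ and $V$ then yields $e_{k'}w'=w'$ for $w'=B(k,k')w$, i.e.\ $e_{k'}$ is a genuine idempotent. Renaming $k'$ to $k$, its image $E_k=\mathrm{im}(e_k)$ is $\kappa$-presentable and $f_k:=\pi_kg_k\colon A_k\to E_k$ is a retract of $g_k\in\cell_\kappa(\cx)$ in $A_k/\ck$ (since $\iota_kf_k=e_kg_k=g_k$); hence $f_k\in\Rt\cell(\cx)\cap(\ck_\kappa)^2=\cof(\cx)\cap(\ck_\kappa)^2=\cof_\kappa(\cx)$.

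Finally I would identify $f$ as a pushout of $f_k$. Pushing $f_k$ out along $\alpha_k\colon A_k\to A$ and using $A\sqcup_{A_k}B_k=A\sqcup_UV=B$, one computes
\[A\sqcup_{A_k}E_k=A\sqcup_{A_k}\bigl(B_k\sqcup_{V\sqcup V}V\bigr)=B\sqcup_{V\sqcup V}V=E,\]
the two maps $V\sqcup V\to B$ being identified as $(\beta_kv_k,\beta_kw)=(v,ev)$, and the corner map $A\to E$ being $\pi g'=f$. Thus $f$ is a pushout of $f_k\in\cof_\kappa(\cx)$, so $f\in\Po\cof_\kappa(\cx)$, as required. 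I expect the idempotency-transport step to be the only genuinely delicate point; the opening reduction to $A/\ck$ and the closing colimit computation are formal.
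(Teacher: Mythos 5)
Your proof is correct, and its skeleton --- reduce via \ref{not2.1}(5) to a split idempotent $e$ over $A$ on a pushout $g'\colon A\to B$ of some $g\colon U\to V$ in $\cell_\kappa(\cx)$, present $A$ as a $\kappa$-filtered colimit of $\kappa$-presentable objects, locate a $\kappa$-presentable stage carrying an idempotent, and exhibit $\mathrm{im}(e)$ as a pushout of its image --- matches the paper's. The genuine difference is in how the small idempotent is produced. The paper invokes the $\omega$-chain mechanism of Lemma~\ref{l:lifting_idempotents}: it never finds an honest idempotent at a single stage, but instead builds a countable chain $Y_{\alpha_1}\to Y_{\alpha_2}\to\cdots$ with non-idempotent comparison maps $\varphi_n$ whose colimit (still a $\kappa$-presentable object of the canonical diagram because $\kappa$ is uncountable) carries an induced idempotent. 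You observe instead that, since $e_k$ is the identity on the $A_k$-summand of $B_k=A_k\sqcup_UV$, idempotency of $e_k$ amounts to the single equation $e_kw=w$ between two maps out of the $\kappa$-presentable object $V$; as $\beta_k$ equalizes them, some $B(k,k')$ already does, and the compatibility $e_{k'}B(k,k')=B(k,k')e_k$ (checked on the two pushout summands) converts this into honest idempotency of $e_{k'}$. This is a real simplification: it bypasses Lemma~\ref{l:lifting_idempotents} and uses uncountability of $\kappa$ nowhere, so your argument establishes the lemma for $\kappa=\omega$ as well (consistent with Example~\ref{ex4.17}, where $R\to eR$ does lie in $\cof_\omega(\cx)$). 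Your explicit identifications $E=B\sqcup_{V\sqcup V}V$ and $E_k=B_k\sqcup_{V\sqcup V}V$ also reduce the final step --- that $\mathrm{im}(e)$ is the pushout of $\mathrm{im}(e_k)$ along $A_k\to A$ --- to a transparent interchange of colimits, a point the paper leaves implicit. The only places you elide justification (that $wg=v_kg$ can be arranged after enlarging $k$, and that idempotents split) are routine consequences of $\kappa$-presentability of $U$ and cocompleteness of $\ck$.
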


\begin{proof}
Let $X\to Y$ be a pushout of a morphism $A\to B$ from $\ck_\kappa$ and let $f\colon Y\to Y$ be an idempotent in $X/\ck$. We want to express its image as an element of $\Po\cof_\kappa(\cx)$. Write $X$ as the canonical $\kappa$-filtered colimit $X=\colim X_\alpha$ of $\kappa$-presentable objects of $A/\ck$. Corresponding to this, $Y$ is a colimit $Y=\colim Y_\alpha$, where $Y_\alpha=X_\alpha\sqcup_AB$. As this diagram has all $\kappa$-small colimits, we may use the proof of Lemma~\ref{l:lifting_idempotents} to find a chain $\alpha_1\to\alpha_2\to\cdots$ together with morphisms $\varphi_n\colon Y_{\alpha_n}\to Y_{\alpha_{n+1}}$ that induce an idempotent $f_\alpha$ on $Y_\alpha=\colim_nY_{\alpha_n}$. Since $f$ restricts to $\id$ on $X$, we may assume at each point, that $\varphi_n$ restricts to the map $X_{\alpha_n}\to X_{\alpha_{n+1}}$ in the canonical diagram (by passing to ``bigger'' $\alpha_{n+1}$ if necessary). In this way, $f_\alpha$ will be an idempotent in $X_\alpha/\ck$. Then the image of $f$ is a pushout of the image of $f_\alpha$, as required.
\end{proof}


\begin{thebibliography}{AR}
\itemsep=2pt
 
\bibitem{AR} J. Ad\'{a}mek and J. Rosick\'{y}, {\em Locally Presentable and Accessible Categories}, Cambridge University
Press 1994.
 
\bibitem{B} T. Beke, {\em Sheafifiable homotopy model categories}, Math. Proc. Cambr. Phil. Soc. 129 (2000),
447-475.

\bibitem{D} D. Dugger, {\em Combinatorial model categories have presentations}, Adv. Math. 164 (2001), 177-201.

\bibitem{J} T. Jech {\em Set Theory}, Academic Press 1978.

\bibitem{Hi} P. Hirschhorn, {\em Model categories and their localizations}, AMS 2003.

\bibitem{Ho} M. Hovey, {\em Model categories}, AMS 1999.

\bibitem{AHT} L. Angeleri H\" ugel and J. Trlifaj, {\em Direct limits of modules of finite projective dimension}, in: Rings, Modules,
Algebras, and Abelian Groups, LNPAM 236, M. Dekker 2004, 27-44.
 
\bibitem{GT} R. G\" obel and J. Trlifaj, {\em Approximation and Endomorphism Algebras of Modules}, 2nd edition, Vol. 1, De Gruyter 2012.

\bibitem{JW} A. Joyal and G. Wraith, {\em Eilenberg-Mac Lane toposes and cohomology}, In: Mathematical Applications of Category Theory,
Contemporary Math. 30 (1984), 117-131.

\bibitem{KR} A. Kurz and J. Rosick\' y, {\em Weak factorizations, fractions and homotopies}, Appl. Cat. Struct. 13 (2005), 141-160.

\bibitem{L} J. Lurie, {\em Higher Topos Theory}, Princeton Univ. Press 2009.

\bibitem{M} M. Makkai, {\em Rearranging colimits: A categorical lemma due to Jacob Lurie}, see http://www.math.mcgill.ca/makkai

\bibitem{MP} M. Makkai and R. Par\' e, {\em Accessible Categories: The Foundations of Categorical Model Theory}, AMS 1989.

\bibitem{MR} M. Makkai and J. Rosick\' y, {\em Cellular categories}, in preparation.

\bibitem{R} J. Rosick\' y, {\em On combinatorial model categories}, Appl. Cat. Str. 17 (2009), 303-316. 

\bibitem{S} J. \v S\v tov\'\i{}\v cek, {\em Deconstructibility and the Hill lemma in Grothendieck categories}, arXiv:1005.3251,
            to appear in Forum Math.
 
\bibitem{SS} M. Saor\'\i{}n and J. \v S\v tov\'\i{}\v cek, {\em On exact categories and applications to triangulated adjoints and model structures}, 
             Adv. Math. 228 (2011), 968-1007.
            
\end{thebibliography}
\end{document}